\documentclass[10pt]{amsart} 

\usepackage{geometry}
\geometry{vmargin={3.3cm , 3.3cm}, hmargin={3.3cm , 3.3cm}}

\setcounter{topnumber}{2}
\setcounter{bottomnumber}{2}
\setcounter{totalnumber}{4}

\usepackage[section]{placeins}
\usepackage{extpfeil}

\usepackage{todonotes}

\usepackage{enumitem,amssymb}
\newlist{todolist}{itemize}{2}
\setlist[todolist]{label=$\square$}
\usepackage{pifont}

\usepackage{multicol}
\setlength{\columnsep}{0.5cm}

\usepackage{amsfonts}
\usepackage{amsmath}
\usepackage{amsthm}
\usepackage{euscript}
\usepackage{amssymb}
\usepackage[colorlinks]{hyperref}
\usepackage{mathrsfs}
\usepackage{xypic}
\usepackage{tikz}
\usepackage{caption}
\usepackage{subfig}
\usepackage{float}
\captionsetup[subfloat]{labelformat=parens,labelfont=rm,margin=0pt, parskip=0pt, hangindent=0pt, indention=0pt, singlelinecheck=true}
\usepackage{mathtools}

\xyoption{all}

\usepackage{fancyvrb}
\RecustomVerbatimEnvironment{Verbatim}{Verbatim}{formatcom=\footnotesize,baselinestretch=0.6,xleftmargin=5mm,xrightmargin=5mm}

\usepackage[ruled]{algorithm}
\usepackage{algorithmic}

\theoremstyle{plain}
\newtheorem{thm}{Theorem}[section]

\newtheorem{lem}[thm]{Lemma}
\newtheorem{prop}[thm]{Proposition}

\theoremstyle{definition}
\newtheorem{defn}[thm]{Definition}
\theoremstyle{remark}
\newtheorem{rem}[thm]{Remark}
\newtheorem{ex}[thm]{Example}
\numberwithin{equation}{section}

\newcommand{\Z}{\mathbb Z}

\newcommand{\PP}{{\mathbb P}}

\newcommand{\mm}{\mathfrak{m}}

\newcommand{\OO}{\mathcal{O}}

\newcommand{\EE}{\mathcal{E}}

\newcommand{\ra}{\rightarrow}

\renewcommand{\ge}{\geqslant}
\renewcommand{\le}{\leqslant}

\DeclareMathOperator{\HH}{H} 

\DeclareMathOperator{\Ext}{Ext} 
\DeclareMathOperator{\Hom}{Hom}
\DeclareMathOperator{\im}{im} 
\DeclareMathOperator{\reg}{reg} 

\DeclareMathOperator{\coker}{coker} 

\DeclareMathOperator{\T}{T}

\newcommand{\Tor}[4]{\textnormal{Tor}_{#3}({#1},{#2})_{#4}}

\DeclareMathOperator{\SL}{\mathrm{SL}}

\def \xr {\xrightarrow}
\def \kk {\boldsymbol{k}}
\def \bE {\boldsymbol{E}}

\def \bF {\boldsymbol{F}}
\def \bM {\boldsymbol{M}}
\def \bN {\boldsymbol{N}}
\def \bG {\boldsymbol{G}}

\begin{document}

\title[]{Truncated modules and\\ linear presentations of vector bundles}

\author{Ada Boralevi}
\address{Dipartimento di Scienze Matematiche \lq\lq Giuseppe Luigi Lagrange\rq\rq, Politecnico di Torino, Corso Duca degli Abruzzi 24, 10129 Torino, Italy}
\email{\href{mailto:ada.boralevi@polito.it}{ada.boralevi@polito.it}}

\author{Daniele Faenzi}
\address{Institut de Math\'ematiques de Bourgogne,
UMR CNRS 5584,
Universit\'e de Bourgogne et Franche Comt\'e,
9 Avenue Alain Savary,
BP 47870,
21078 Dijon Cedex,
France}
\email{\href{mailto:daniele.faenzi@u-bourgogne.fr}{daniele.faenzi@u-bourgogne.fr}}

\author{Paolo Lella}
\address{INdAM, Unit\`a di Ricerca del Dipartimento di Matematica \lq\lq G.~Peano\rq\rq, Universit\`a degli Studi di Torino, Via Carlo Alberto 10, 10123 Torino, Italy}
\curraddr{}
\email{\href{mailto:paolo.lella.math@gmail.com}{paolo.lella.math@gmail.com}}
\urladdr{\url{http://www.paololella.it/}}

\thanks{Research partially supported by MIUR funds, PRIN 2010-2011 project \lq\lq Geometria delle variet\`a algebriche\rq\rq, 
FIRB 2012 \lq\lq Moduli spaces and Applications\rq\rq, GEOLMI ANR-11-BS03-0011, J.~Draisma's Vidi grant from the Netherlands Organization for Scientific Research (NWO) and 
Universit\`a degli Studi di Trieste -- FRA 2011 project \lq\lq Geometria e topologia delle variet\`a\rq\rq. All authors are members of GNSAGA}

\subjclass[2010]{13D02, 16W50, 15A30, 14J60}

\keywords{Graded truncated module, linear presentation, matrix of constant rank, vector bundle, instanton bundle}

\begin{abstract}
We give a new method to construct linear spaces of matrices of constant rank, based on truncated graded cohomology modules of certain vector bundles 
as well as on the existence of graded Artinian modules with pure resolutions. 
Our method allows one to produce several new examples, and provides an alternative point of view on the existing ones. 
\end{abstract}

\maketitle

\section*{Introduction}

A space of matrices of constant rank is a vector subspace $V$, say of dimension $n+1$, of the set
$\mathrm{M}_{a,b}(\kk)$ of matrices of size $a \times b$ over a field $\kk$, such that any element of $V \setminus \{0\}$ has fixed rank
$r$. It is a classical problem, rooted in work of Kronecker and Weierstrass, to look for examples of such spaces of matrices,
and to give relations among the possible values of the parameters $a,b,r,n$. 

One can see $V$ as an $a \times b$ matrix whose entries are linear
forms (a \lq\lq linear matrix\rq\rq), and interpret the cokernel as a vector space varying smoothly (i.e.~a vector bundle) over $\PP^n$.
In
\cite{Eisenbud_Harris,Ilic_JM,sylvester} the relation between matrices of
constant rank and the study of vector bundles on $\PP^n$ and their invariants was first
studied in detail.
This interplay was pushed one step further in
\cite{bo_fa_me,bo_me_piani}, where the matrix of constant rank was
interpreted as a $2$-extension from the two vector bundles given by
its cokernel and its kernel. This allowed the construction of skew-symmetric matrices of linear forms in $4$ variables of size $14
\times 14$ and corank $2$, beyond the previous \lq\lq record\rq\rq\ of \cite{Westwick}. 
In this paper we turn the tide once again and introduce a new effective method to construct linear
matrices of constant rank; our method not only allows one to produce new examples beyond previously known techniques, but it also 
provides an alternative point of view on the existing examples.

The starting point of our analysis is that linear matrices of relatively small size can be cooked up with two
ingredients, namely two finitely generated graded modules $\bE$ and $\bG$ over the ring $R=\kk[x_0,\ldots,x_n]$, 
admitting a linear resolution up to a certain step. 
Here, the module $\bG$ should be thought of as a \lq\lq small
perturbating factor\rq\rq\
of the resolution of $\bE$, in such a way that $\bG$ does not affect
the local behavior of $\bE$ (in other words $\bG$ should be Artinian). Then, under suitable conditions, the kernel $\bF=\ker(\mu)$ of a surjective map
$\mu : \bE \to \bG$ will only have linear and quadratic syzygies; by imposing further constraints we obtain a presentation 
matrix for $\bF$ that is not only linear, but also of smaller size than that of $\bE$, as the presentation matrix of $\bG$ is \lq\lq subtracted\rq\rq\ from that of $\bE$.
The key idea here is that, in order for $\bE$ to fit our purpose, it
is necessary to {\it truncate} it above a certain range, typically
its regularity, which ensures linearity of the resolution, while leaving the rank of the matrix
presenting $\bE$ unchanged. 
To connect this result with linear matrices of constant rank, one takes $\bE$ and $\bG$ such that their 
sheafifications are vector bundles over $\PP^n$; this, together with one more technical assumption, guarantees that the presentation matrix of $\bE$,
as well as that of any of its higher truncation, actually has constant
rank. This is the content of our two main results, Theorems \ref{teorema A} and \ref{teorema B}.

In order to obtain interesting matrices via our method, we analyze Artinian modules $\bG$ with pure resolution, exploiting some basic results of Boij-S\"oderberg theory \cite{BoijSoederberg1,Eis_Floy_Wey_PureRes,EisenbudSchreyer_gradedModules,ESS}.
Note that the choice of where one needs to truncate is arbitrary here; in this
sense we define a tree structure associated with any linearly presented module $\bE$, rooted in $\bE$, whose
nodes are the possible forms of linear matrices whose sheafified cokernel is $\tilde{\bE}$, and whose edges correspond to the Artinian modules with pure resolution that we use in the construction of the matrices.

To appreciate the validity of our approach one should compare it with previously known techniques, namely ``ad hoc'' constructions and projection from bigger size matrices. 
Our graded modules go sometimes beyond, and in particular allow one to achieve the construction of matrices with \lq\lq small constant corank\rq\rq\ with respect
to the size, which is where the projection method fails, and which is one of the hardest tasks in this type of problems, especially so when $n\ge 3$. 
(See \S \ref{comparison} for such a comparison, as well as a more precise definition of small corank.)

One of the goals of this paper is to provide a list of matrices of constant rank arising from
vector bundles over projective spaces. We concentrate here
on the most classical constructions (instantons, null-correlation bundles
and so forth). To our knowledge however, among the examples of matrices of constant rank 
that we construct from these bundles, very few were known before. All this is developed in \S \ref{vb}.

The paper also contains a discussion about linear matrices of constant
rank with extra symmetry properties: in \S \ref{sec4} we 
examine the conditions needed for a constant rank matrix $A$ constructed with our method to be 
skew-symmetric in a suitable basis (we call such matrix skew-symmetrizable). This is tightly related with the results of \cite{bo_fa_me}; 
indeed the outcomes of this section should be seen as a parallel of those of \cite{bo_fa_me} which complements and 
explains the techniques used there, relying on commutative algebra rather than derived categories.

Another advantage of our technique is that it is algorithmic, and can
be implemented in a very efficient way.
This not only  provides a detailed explanation of the algorithm appearing in \cite[Appendix A]{bo_fa_me}, but allows to construct infinitely many
examples of skew-symmetric $10 \times 10$ matrices of constant rank $8$ in $4$ variables; up to now, the only example of such was that of \cite{Westwick}. 

Finally, let us note that all these examples, and many more, can be explicitly constructed thanks to the {\tt Macaulay2} 
\cite{M2} package {\tt ConstantRankMatrices} implementing our algorithms. The interested reader can find it at the website \href{http://www.paololella.it/EN/Publications.html}{\tt www.paololella.it/EN/Publications.html}.

\section{Main results}\label{main}

\subsection{Notation and preliminaries}

Let $R := \kk[x_0,\ldots,x_n]$ be a homogeneous polynomial ring over an algebraically closed field $\kk$ of characteristic other than 2. 
The ring $R$ comes with a grading $R=R_0 \oplus R_1 \oplus \ldots$, with $R_0=\kk$.

All $R$-modules here are finitely generated and graded. If $\bM$ is such a module, and $p$, $q$ are integers, 
we denote by $\bM_p$ the $p$-th \emph{graded component} of $\bM$, so that $\bM=\oplus_p \bM_p$, and by $\bM(q)$ the $q$th \emph{shift} of $\bM$, 
defined by the formula $\bM(q)_p=\bM_{p+q}$. Finally, the module $\bM_{\ge m}=\oplus_{p \ge m}\bM_p$ is the \emph{truncation} of $\bM$ at degree $m$.

A module $\bM$ is \emph{free} if $\bM \simeq \oplus_i R(q_i)$ for suitable $q_i$. Given any other 
finitely generated graded $R$-module $\bN$, we write $\Hom_R(\bM,\bN)$ for the set of homogeneous maps of all degrees, which is 
again a graded module, graded by the degrees of the maps. Since graded free resolutions exist, this construction extends to a grading on the 
modules $\Ext^q_R(\bM,\bN)$. The same holds true for $\bM \otimes_R \bN$
and $\Tor{\bM}{\bN}{}{}$. For any $R$-module $\bM$, we denote by $\beta_{i,j}(\bM)$ the graded Betti numbers of the minimal resolution of $\bM$, i.e.
\[
\cdots\longrightarrow \bigoplus_{j_i} R(-j_i)^{\beta_{i,j_i}} \longrightarrow \cdots\longrightarrow \bigoplus_{j_1} R(-j_1)^{\beta_{1,j_1}} \longrightarrow \bigoplus_{j_0} R(-j_0)^{\beta_{0,j_0}} \longrightarrow \bM \longrightarrow 0.
\]

For $p \gg 0$, the Hilbert function $\dim_{\kk} \bM_p$ is a polynomial in $p$. The degree of this polynomial, increased by 1, is $\dim \bM$, the 
\emph{dimension} of $M$. The \emph{degree} of $\bM$ is by definition
$(\dim \bM)!$  times the leading coefficient of this polynomial. 

We say that $\bM$ has \emph{$m$-linear resolution} over $R$ if the minimal graded free resolution of $\bM$ reads: 
\[
\cdots \longrightarrow R(-m-2)^{\beta_{2,m+2}} \longrightarrow R(-m-1)^{\beta_{1,m+1}} \longrightarrow R(-m)^{\beta_{0,m}} \longrightarrow \bM \longrightarrow 0
\]
for suitable integers $\beta_{i,m+i}$. In other words, $\bM$ has a $m$-linear resolution if $\bM_r=0$ for $r < m$, $\bM$ is generated 
by $\bM_m$, and $\bM$ has a resolution where all the maps are represented by matrices of linear forms. 
In the case where only the first $k$ maps are matrices of linear forms then $\bM$ is
said to be \emph{$m$-linear presented up to order $k$}, or just \emph{linearly presented} when $k=1$.

A module $\bM$ is \emph{$m$-regular} if, for all $p$, the local cohomology groups $\HH^p_{\mm}(\bM)_r$ vanish 
for $r=m-p+1$ and also $\HH^0_{\mm}(\bM)_r=0$ for all $r \ge m+1$. The regularity of a module is denoted 
by $\reg(\bM)$ and can be computed from the Betti numbers as $\max\{j-i\ \vert\ \beta_{i,j} \neq 0\}$.
Note that $\bM_{\ge \reg(\bM)}$ always has $m$-linear resolution.

\subsection{Main Theorems}

Let $\bE$ and $\bG$ be finitely generated graded $R$-modules with minimal graded free resolutions as follows:
\begin{align}
\label{eq:resE}  & \cdots \longrightarrow E^1 \xr{e_1} E^0 \xr{e_0} \bE \longrightarrow 0,\:\hbox{and} \\
  & \cdots \longrightarrow G^1 \xr{g_1} G^0 \xr{g_0} \bG \longrightarrow 0.
\end{align}
A morphism $\mu : \bE \to \bG$ induces maps $\mu^i : E^i \to G^i$,
determined up to chain homotopy. 

Note that, in case $\bE$ and $\bG$
are linearly presented up to order $j$, the maps $\mu^i$ are uniquely
determined for $i \le j-1$. Indeed, by induction on $i$ we may look at
the difference $\delta^i$ between two
morphisms $\mu^i$ and $\hat \mu^i$ satisfying $g_i \mu^i = \mu^{i-1}
e_i = g_i \hat \mu^i$. This lifts to a map $E^i \to \ker(g_{i})$ and
therefore to a map $E^i \to G^{i+1}$. But if $i \le j-1$ and $\bE$ and
$\bG$ are linearly presented up to order $j$, this map is zero
and $\mu_i=\hat \mu_i$.

\begin{thm}\label{teorema A}
Let $\bE$ and $\bG$ be 
$m$-linearly presented $R$-modules, respectively up to order $1$ and $2$. Let 
$\mu:\bE \to \bG$ be a surjective morphism and consider the
induced maps $\mu^i$'s. Then $\bF = \ker(\mu)$ is generated in
  degree $m$ and $m+1$, and:
\begin{enumerate}[label={\roman*})]
\item \label{ii} if $\mu^1$ is surjective, $\bF$ is generated in degree
  $m$ and has linear and quadratic syzygies. Furthermore $\beta_{0,m}(\bF)=\beta_{0,m}(\bE)-\beta_{0,m}(\bG)$;
\item \label{iii} if moreover $\mu^2$ is surjective, $\bF$ is linearly presented
  and $\beta_{1,m+1}(\bF)=\beta_{1,m+1}(\bE)-\beta_{1,m+1}(\bG)$.
\end{enumerate}
\end{thm}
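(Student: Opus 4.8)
The plan is to read off every graded Betti number of $\bF$ from the long exact sequence obtained by applying $-\otimes_R\kk$ to the short exact sequence
\[
0 \longrightarrow \bF \longrightarrow \bE \xr{\mu} \bG \longrightarrow 0 ,
\]
using $\beta_{i,j}(\bN)=\dim_\kk\Tor{\bN}{\kk}{i}{j}$. First I would record the vanishing ranges forced by the hypotheses: since $\bE$ is $m$-linearly presented up to order $1$ one has $\Tor{\bE}{\kk}{0}{j}=0$ for $j\neq m$ and $\Tor{\bE}{\kk}{1}{j}=0$ for $j\neq m+1$; since $\bG$ is $m$-linearly presented up to order $2$ one has in addition $\Tor{\bG}{\kk}{0}{j}=0$ for $j\neq m$, $\Tor{\bG}{\kk}{1}{j}=0$ for $j\neq m+1$ and $\Tor{\bG}{\kk}{2}{j}=0$ for $j\neq m+2$. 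The one non-formal ingredient I would isolate up front is that, both resolutions being minimal, the maps $\Tor{\bE}{\kk}{i}{j}\to\Tor{\bG}{\kk}{i}{j}$ occurring in the long exact sequence coincide with the reductions $\mu^i\otimes_R\kk$ of the induced maps; I would also note that $\mu^i\otimes_R\kk$ does not depend on the lift $\mu^i$ (changing the lift alters $\mu^i$ by a map through $G^{i+1}$, which dies modulo $\mm$), so the hypotheses on $\mu^1$ and $\mu^2$ concern well-defined objects.

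Then I would scan the relevant portions of the sequence. From
\[
\Tor{\bE}{\kk}{1}{j}\longrightarrow\Tor{\bG}{\kk}{1}{j}\longrightarrow\Tor{\bF}{\kk}{0}{j}\longrightarrow\Tor{\bE}{\kk}{0}{j}\longrightarrow\Tor{\bG}{\kk}{0}{j}\longrightarrow 0 ,
\]
since $\Tor{\bG}{\kk}{1}{j}$ and $\Tor{\bE}{\kk}{0}{j}$ vanish for $j\notin\{m,m+1\}$, so does $\Tor{\bF}{\kk}{0}{j}$, i.e.\ $\bF$ is generated in degrees $m$ and $m+1$. For $j=m$ the term $\Tor{\bG}{\kk}{1}{m}$ is zero, so the portion reduces to a short exact sequence $0\to\Tor{\bF}{\kk}{0}{m}\to\Tor{\bE}{\kk}{0}{m}\to\Tor{\bG}{\kk}{0}{m}\to 0$, giving $\beta_{0,m}(\bF)=\beta_{0,m}(\bE)-\beta_{0,m}(\bG)$. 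For $j=m+1$ the term $\Tor{\bE}{\kk}{0}{m+1}$ is zero, so $\Tor{\bF}{\kk}{0}{m+1}=\coker\big(\mu^1\otimes_R\kk\big)$; if $\mu^1$ is surjective then $\mu^1\otimes_R\kk$ is surjective by right exactness of $-\otimes_R\kk$, hence $\Tor{\bF}{\kk}{0}{m+1}=0$ and $\bF$ is generated in degree $m$. For the syzygies I would use $\Tor{\bG}{\kk}{2}{j}\to\Tor{\bF}{\kk}{1}{j}\to\Tor{\bE}{\kk}{1}{j}$: since $\Tor{\bE}{\kk}{1}{j}$ and $\Tor{\bG}{\kk}{2}{j}$ vanish unless $j\in\{m+1,m+2\}$, the module $\Tor{\bF}{\kk}{1}{\bullet}$ is supported in degrees $m+1$ and $m+2$, which — $\bF$ being generated in degree $m$ — is exactly the statement that $\bF$ has linear and quadratic syzygies.

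For the second part I would add the hypothesis that $\mu^2$ is surjective, so that it remains only to prove $\Tor{\bF}{\kk}{1}{m+2}=0$. In degree $m+2$ the term $\Tor{\bE}{\kk}{1}{m+2}$ vanishes, so $\Tor{\bF}{\kk}{1}{m+2}=\coker\big(\Tor{\bE}{\kk}{2}{m+2}\to\Tor{\bG}{\kk}{2}{m+2}\big)$; the displayed map is the degree-$(m+2)$ part of $\mu^2\otimes_R\kk$, hence surjective (again by right exactness, $\mu^2$ being onto), so the cokernel vanishes and $\bF$ is linearly presented. Finally, in degree $m+1$ the term $\Tor{\bG}{\kk}{2}{m+1}$ vanishes, so $\Tor{\bF}{\kk}{1}{m+1}=\ker\big(\mu^1\otimes_R\kk\big)$; since $\mu^1\otimes_R\kk$ is surjective between these two spaces, counting dimensions gives $\beta_{1,m+1}(\bF)=\beta_{1,m+1}(\bE)-\beta_{1,m+1}(\bG)$.

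I do not expect a serious obstacle: once the short exact sequence is in place, the whole argument is the bookkeeping of which graded pieces of the $\Tor$-modules are nonzero, and surjectivity of $\mu^1$ and $\mu^2$ enters only through right exactness of $-\otimes_R\kk$. The one point that genuinely needs care is the identification of the maps $\Tor{\bE}{\kk}{i}{j}\to\Tor{\bG}{\kk}{i}{j}$ in the long exact sequence with $\mu^i\otimes_R\kk$; this is where minimality of the resolutions is used and is what lets the surjectivity hypotheses do their job. Should one wish to avoid the long exact sequence, an alternative is to resolve $\bF$ by the shifted mapping cone of a lift $\mu^\bullet\colon E^\bullet\to G^\bullet$, a (generally non-minimal) complex $\cdots\to E^1\oplus G^2\to E^0\oplus G^1\to\bF\to 0$, and then cancel the summands $G^1$ and $G^2$ against direct summands of $E^1$ and $E^2$ by means of the surjectivity of $\mu^1$ and $\mu^2$; this makes the heuristic \lq\lq the presentation matrix of $\bG$ is subtracted from that of $\bE$\rq\rq\ literal, at the cost of having to control the quadratic correction terms created by the cancellations — precisely the computation the $\Tor$ argument sidesteps.
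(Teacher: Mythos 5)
Your argument is correct, but it takes a genuinely different route from the paper's. You run the long exact sequence of $\Tor{-}{\kk}{\bullet}{}$ along $0\to\bF\to\bE\to\bG\to 0$ and read off every graded Betti number from the vanishing ranges imposed by the linearity hypotheses, using minimality to identify the comparison maps with $\mu^i\otimes_R\kk$; the paper instead applies the snake lemma twice to the commutative ladders built from the two minimal resolutions (with $J^i=\im(e_i)$, $K^i=\im(g_i)$), obtaining the four-term sequences \eqref{snake} and \eqref{snake2} and arguing about generation degrees of $J^1$, $\coker(\nu^1)$ and $\coker(\nu^2)$. Your version is more systematic and in fact proves slightly more than is stated (e.g.\ $\beta_{0,m}(\bF)=\beta_{0,m}(\bE)-\beta_{0,m}(\bG)$ already without surjectivity of $\mu^1$, and exact formulas for $\Tor{\bF}{\kk}{1}{m+1}$ and $\Tor{\bF}{\kk}{1}{m+2}$ as kernel and cokernel of the reduced comparison maps). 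What the paper's heavier route buys is the explicit intermediate object $\coker(\nu^2)$ together with the sequence \eqref{snake2}: Theorem \ref{teorema B} replaces "$\mu^2$ surjective" by the weaker condition that $\coker(\nu^2)$ is Artinian, and its proof reuses the diagram verbatim, so the snake-lemma bookkeeping is not redundant in the paper's architecture. One small imprecision: a change of lift alters $\mu^i$ by $g_{i+1}\circ h_i+h_{i-1}\circ e_i$, so the correction is not only "a map through $G^{i+1}$" but also a term factoring through $e_i$; both die modulo $\mm$ by minimality of the \emph{two} resolutions, so your conclusion that $\mu^i\otimes_R\kk$ is well defined stands.
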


\begin{proof}
  Set $J^i = \im(e_i)$ and $K^i = \im(g_i)$. The map $\mu$ induces an exact 
  commutative diagram:
  \begin{center}
\begin{tikzpicture}[scale=0.8]
\node (e0) at (0,0) [] {$0$};
\node (e1) at (2,0) [] {$J^1$};
\node (e2) at (4,0) [] {$E^0$};
\node (e3) at (6,0) [] {$\bE$};
\node (e4) at (8,0) [] {$0$};

\draw [->] (e0) -- (e1);
\draw [->] (e1) -- (e2);
\draw [->] (e2) -- (e3);
\draw [->] (e3) -- (e4);

\node (g0) at (0,-1.5) [] {$0$};
\node (g1) at (2,-1.5) [] {$K^1$};
\node (g2) at (4,-1.5) [] {$G^0$};
\node (g3) at (6,-1.5) [] {$\bG$};
\node (g4) at (8,-1.5) [] {$0$};

\draw [->] (g0) -- (g1);
\draw [->] (g1) -- (g2);
\draw [->] (g2) -- (g3);
\draw [->] (g3) -- (g4);

\draw [->] (e1) --node[right]{\small $\nu^1$} (g1);
\draw [->] (e2) --node[right]{\small $\mu^0$} (g2);
\draw [->] (e3) --node[right]{\small $\mu$} (g3);
\end{tikzpicture}
\end{center}

  Note that $E^0 \to \bG$ is
  surjective. Hence also $\mu^0$ has
  to be surjective for otherwise the generators of $\bG$ lying in
  $G^0$ and not hit by $\mu^0$
  would be redundant, contradicting the minimality of $G^0 \to \bG$.

  Setting $\alpha_0=\beta_{0,m}(\bE)$ and $\gamma_0=\beta_{0,m}(\bG)$,
  we have $E^0 = R(-m)^{\alpha_0}$ and $G^0 = R(-m)^{\gamma_0}$
  because $\bE$ and $\bG$ are $m$-linearly presented.
  So by the obvious exact sequence
  \[
  0\longrightarrow \ker(\mu^0) \longrightarrow  R(-m)^{\alpha_0} \longrightarrow  R(-m)^{\gamma_0} \longrightarrow 0,    
  \]
  we deduce $\ker(\mu^0) \simeq R(-m)^{\alpha_0 - \gamma_0}$.
  Hence, applying snake lemma to the previous diagram, we get:
  \begin{equation}
    \label{snake}
  0 \longrightarrow \ker(\nu^1) \longrightarrow R(-m)^{\alpha_0 - \gamma_0} \longrightarrow \bF \longrightarrow \coker(\nu^1) \longrightarrow 0.
  \end{equation}

  The $R$-module $J^1$ is generated in degree $m+1$ and
  $\coker(\nu^1)$ is a quotient of $J^1$, so also
  $\coker(\nu^1)$ is generated in degree $m+1$. Therefore, by
  \eqref{snake}, we get that $\bF$ is generated in degree $m$ and $m+1$,
  which proves the first statement.
  \bigskip

  Let us now prove \ref{ii}. Assume that $\mu^1$ is surjective, and write the exact commutative diagram:
    \begin{center}
\begin{tikzpicture}[scale=0.8]
\node (e0) at (0,0) [] {$0$};
\node (e1) at (2,0) [] {$J^2$};
\node (e2) at (4,0) [] {$E^1$};
\node (e3) at (6,0) [] {$J^1$};
\node (e4) at (8,0) [] {$0$};

\draw [->] (e0) -- (e1);
\draw [->] (e1) -- (e2);
\draw [->] (e2) -- (e3);
\draw [->] (e3) -- (e4);

\node (g0) at (0,-1.5) [] {$0$};
\node (g1) at (2,-1.5) [] {$K^2$};
\node (g2) at (4,-1.5) [] {$G^1$};
\node (g3) at (6,-1.5) [] {$K^1$};
\node (g4) at (8,-1.5) [] {$0$};

\draw [->] (g0) -- (g1);
\draw [->] (g1) -- (g2);
\draw [->] (g2) -- (g3);
\draw [->] (g3) -- (g4);

\draw [->] (e1) --node[right]{\small $\nu^2$} (g1);
\draw [->] (e2) --node[right]{\small $\mu^1$} (g2);
\draw [->] (e3) --node[right]{\small $\nu^1$} (g3);
\end{tikzpicture}
\end{center}
%

  Since $\mu^1$ is surjective, we get that $\nu^1$ is surjective as
  well, so that 
  $\coker(\nu^1)=0$. By \eqref{snake}, this says that $\bF$ is
  generated in degree $m$. Moreover, we obtain
  $\beta_{0,m}(\bF)=\alpha_0-\gamma_0$, indeed $\ker(\nu^1)$ sits in
  $J^1$, which is minimally generated in degree $m+1$, so
  $R(-m)^{\alpha_0-\gamma_0} \to \bF$ is minimal.

  Setting $\alpha_1=\beta_{1,m+1}(\bE)$ and
  $\gamma_1=\beta_{1,m+1}(\bG)$, as before we get $\ker(\mu^1) \simeq R(-m-1)^{\alpha_1 - \gamma_1}$.
  Again by snake lemma we obtain the exact sequence:
  \begin{equation}
    \label{snake2}
    0 \longrightarrow \ker(\nu^2) \longrightarrow R(-m-1)^{\alpha_1 - \gamma_1} \longrightarrow  \ker(\nu^1) \longrightarrow \coker(\nu^2) \longrightarrow 0.
  \end{equation}

  As before, $\coker(\nu^2)$ is generated in degree $m+2$, so that
  $\bF$ has linear and quadratic syzygies, whereby proving \ref{ii}.

  Finally, to prove \ref{iii}, if $\mu^2$ is surjective then $\coker(\nu^2)=0$, and
  therefore $\bF$ is linearly presented by \eqref{snake2}.
  Moreover $\beta_{1,m+1}(\bF)=\beta_{1,m+1}(\bE)-\beta_{1,m+1}(\bG)$.
  Note that the presentation of $\bF$ is necessarily minimal in this case.
\end{proof}

\begin{ex}
  In this example, we show that the rank of $\mu^2$  depends
  on the map $\mu$ and on the module $\bE$ in a rather subtle way,  even
  assuming $\mu^1$ surjective and $\bG=\kk$, the residual field. It is exactly this subtlety that, in a previous version of this paper, 
  lead us to the false belief that this surjectivity condition, and the subsequent inequality on the Betti numbers of $\bE$ and $\bG$, 
  were sufficient for $\bF$ to have linear presentation.

  Let $n=2$ and consider positive integers $a$ and $b$ with $b-a \ge 2$. Let $\bE$ be defined by a linear matrix
  $A$ of size
  $a\times b$ of constant rank $b-a$, so that $\bE$ is a linearly
  presented module of rank $b-a$. The module $\bE$ is associated with a
  Steiner bundle $E$ of rank $b-a$ on $\PP^2$, see \S \ref{steiner}.
  Take $\bG=\kk$. Any non-zero map
  $\mu : \bE \to \kk$ is surjective and is uniquely defined by the
  choice of a non-zero linear form $\theta : \kk^b \to \kk$ representing a linear
  combination of the rows of $A$.

  The kernel of the obvious Koszul syzygy $R(-1)^3 \to R$ of $\kk$ is
  the module associated with the sheaf of differential forms $\Omega_{\PP^2}$.
  The map $\mu^1 : R(-1)^{a} \to R(-1)^3$ commuting with this syzygy is defined
  by a scalar matrix $\kk^{a} \to \kk^3$, whose image is nothing but the
  linear span in $\kk^3$ of the linear forms $\theta A : R(-1)^{a}
  \to \kk$. Indeed, the desired map $\kk^{a} \to \kk^3$ is just the $a \times 3$  matrix of the coefficients
  of $\theta A $.

  The possible values for the corank $v$ of this map sit between
  $\max \{0,3-a\}$ and $2$. Indeed, the map is non-zero (i.e.~$v \le 2$) as
  otherwise two rows of $A$ would be linearly dependent and
  $E$ could not be locally free of rank $2$. But $A$ may  have
  the same linear form appearing in every entry of the first row, in
  which case choosing $\theta$ as $(1,0,\ldots,0)$ we get $v=2$, and
  so forth.
  As the value $v$ changes, we get a minimal resolution for $\ker(\mu)$ of the form:
  \[
  0 \longrightarrow R(-3) \longrightarrow R(-2)^3 \oplus\\ R(-1)^{a+v-3}
  \longrightarrow
  R(-1)^v \oplus R^{b-1}
  \longrightarrow \ker(\mu) \longrightarrow 0.
  \]
  In terms of vector bundles, this reads:
  \[
  0 \longrightarrow \Omega_{\PP^2}\oplus\\ \OO_{\PP^2}(-1)^{a+v-3}
  \longrightarrow
  \OO_{\PP^2}(-1)^v \oplus \OO_{\PP^2}^{b-1}
  \longrightarrow E \longrightarrow 0.
  \]
  The cokernel of the map $\nu^2$ is $R^v$, which of course is Artinian if and
  only if $v=0$. If $v \ne 0$, then $\mu^2$ is not surjective. If $v=0$,
  the linear matrix $R(-1)^{a-3} \to 
  R^{b-1}$ has constant rank $b-a+2$.
\end{ex}

The previous example does not comply with condition \ref{iii} of Theorem
\ref{teorema A}, so we cannot produce a second linear matrix of
constant corank 2. However, the linear part of the presentation matrix
of $\ker(\mu)$ still has constant rank. This is explained by our next main result.

\begin{thm}\label{teorema B}
	In the assumptions and notations of Theorem \ref{teorema A},
        item \ref{ii}, suppose furthermore that: 
\begin{enumerate}[label={\roman*})]
\item	the sheaves $E=\tilde\bE$ and $G=\tilde\bG$ 
 are vector bundles 
        on $\PP^n$ of rank $r$ and $s$ respectively;
\item\label{it:teorema B_ii} the map $\nu^2$ has Artinian cokernel.
\end{enumerate}
        Set $a = \beta_{0,m}(\bE)-\beta_{0,m}(\bG)$ and 
        $b = \beta_{1,m+1}(\bE)-\beta_{1,m+1}(\bG)$.
	Then the presentation matrix $A$ of $\bF=\ker(\mu)$ has a
        linear part of size
        $a \times b$ and constant corank $r-s$.
	Moreover the sheafification $F=\tilde{\bF}$ of $\bF$ is
        isomorphic to the kernel of $\tilde\mu : E \to G$.
\end{thm}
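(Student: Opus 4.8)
The plan is to combine the Betti-number bookkeeping from Theorem \ref{teorema A}(\ref{ii}) with the sheafification of the relevant exact sequences, and then extract the constant-corank statement from a local analysis on $\PP^n$. First I would record, using Theorem \ref{teorema A}(\ref{ii}), that $\bF$ is generated in degree $m$ with $\beta_{0,m}(\bF) = a$, and that it has linear and quadratic syzygies; in particular the minimal presentation of $\bF$ starts with $R(-m)^a$ and the linear strand of the first syzygies is $R(-m-1)^b$. Thus the presentation matrix $A$ has a linear block of size $a \times b$ as claimed, with a possibly non-empty quadratic block sitting next to it. The content to prove is that the $a \times b$ linear block has constant corank $r-s$.

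Next I would sheafify. Sheafifying the resolutions of $\bE$ and $\bG$ and the map $\mu$, and using that $E = \tilde{\bE}$ and $G = \tilde{\bG}$ are vector bundles so $\tilde\mu : E \to G$ is a surjection of bundles, I would identify $F := \tilde{\bF}$ with $\ker(\tilde\mu)$: sheafification is exact, $\bF = \ker(\mu)$ gives $\tilde{\bF} = \ker(\tilde\mu)$, and the kernel of a surjection of bundles is again a bundle, of rank $r - s$. This proves the last sentence of the statement. Sheafifying \eqref{snake} (with $\coker(\nu^1)=0$) gives a four-term exact sequence of sheaves
\[
0 \longrightarrow \widetilde{\ker(\nu^1)} \longrightarrow \OO_{\PP^n}(-m)^{a} \longrightarrow F \longrightarrow 0,
\]
after also sheafifying \eqref{snake2}: the hypothesis that $\nu^2$ has Artinian cokernel means $\widetilde{\coker(\nu^2)} = 0$, so \eqref{snake2} sheafifies to
\[
0 \longrightarrow \widetilde{\ker(\nu^2)} \longrightarrow \OO_{\PP^n}(-m-1)^{b} \longrightarrow \widetilde{\ker(\nu^1)} \longrightarrow 0,
\]
and splicing the two yields a locally free presentation
\[
\OO_{\PP^n}(-m-1)^{b} \xrightarrow{\ \tilde A_{\mathrm{lin}}\ } \OO_{\PP^n}(-m)^{a} \longrightarrow F \longrightarrow 0,
\]
where $\tilde A_{\mathrm{lin}}$ is the sheafification of the linear block of $A$ (the quadratic block disappears upon sheafification only after the twist normalization, or rather: the sheafified presentation coming from \eqref{snake}–\eqref{snake2} is exactly the linear block, since $\ker(\nu^1)$, not all the first syzygies of $\bF$, is what maps onto the kernel). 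I would take a moment to make the last point precise: the linear block of $A$ is the matrix of the composite $R(-m-1)^{b} \to \ker(\nu^1) \hookrightarrow R(-m)^{a}$, because $\ker(\nu^1) \subset J^1$ is generated in degree $\ge m+1$ and its degree-$(m+1)$ part gives exactly the linear syzygies of $\bF$; hence $\tilde A_{\mathrm{lin}}$ presents $F$ as above.

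Finally I would run the pointwise rank computation. Fix $x \in \PP^n$. Since the sequence $0 \to \widetilde{\ker(\nu^2)} \to \OO(-m-1)^{b} \xrightarrow{\tilde A_{\mathrm{lin}}} \OO(-m)^{a} \to F \to 0$ is an exact sequence of vector bundles, restricting to the fiber at $x$ (which is right-exact, and left-exact here because all terms are bundles so the sequence is locally split) gives
\[
0 \longrightarrow \widetilde{\ker(\nu^2)}_x \longrightarrow \kk^{b} \xrightarrow{\ A_{\mathrm{lin}}(x)\ } \kk^{a} \longrightarrow F_x \longrightarrow 0,
\]
so $\operatorname{rk} A_{\mathrm{lin}}(x) = a - \operatorname{rk} F_x = a - (r - s)$, i.e. the corank of $A_{\mathrm{lin}}(x)$ equals $r - s$, independently of $x$. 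Evaluating a linear matrix at a point $x$ is, up to scalar, evaluation of its entries at the corresponding linear functional, so constancy of the corank of the fiber maps is exactly the statement that the linear block of $A$ has constant corank $r-s$. The main obstacle I anticipate is the bookkeeping in the middle paragraph: one must check carefully that the sheafified $\ker(\nu^1)$-presentation really recovers the \emph{linear} block of $A$ and not some twist or some mixture with the quadratic syzygies — in other words, that passing to sheaves kills precisely the quadratic block and nothing else. This requires using that $\ker(\nu^1)$ is generated in degrees $m+1$ and $m+2$ (from \eqref{snake2}, since $\widetilde{\coker(\nu^2)}$ being zero does not prevent $\coker(\nu^2)$ from contributing degree-$(m+2)$ generators to $\ker(\nu^1)$), and that it is precisely the degree-$(m+1)$ generators that survive as the linear strand presenting $F$; the degree-$(m+2)$ part, together with the quadratic block of $A$, is swallowed by the surjection onto the Artinian-supported discrepancy. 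Once this identification is pinned down, the rank count is immediate.
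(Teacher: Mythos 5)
Your proposal is correct and follows essentially the same route as the paper: both identify the linear block of the presentation with the map $R(-m-1)^b\to R(-m)^a$ coming from \eqref{snake} and \eqref{snake2}, use the Artinianness of $\coker(\nu^2)$ to see that this block presents $F=\ker(\tilde\mu)$ after sheafification, and deduce constant corank $r-s$ from the local freeness of $F$. Your extra care in checking that the degree-$(m+1)$ image of $R(-m-1)^b$ in $\ker(\nu^1)$ consists of minimal generators (so that the linear block really has $b$ columns) is a point the paper leaves implicit, but it is the same argument.
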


\begin{proof}
  The module $\bF$ has $a$ minimal generators, all of degree $m$, by
  Theorem \ref{teorema A}. We have also seen that the syzygies of these generators are
  precisely the module $\ker(\nu^1)$, and that $\coker(\nu^1)=0$.
  For $i \ge 0$, set $\alpha_i = \beta_{i,m+i}(\bE)$ and $\gamma_i =  \beta_{i,m+i}(\bG)$.  
  From the proof of Theorem \ref{teorema A}, we extract the following commutative
  diagram:
  \begin{equation}
    \label{diagrammozzo}
  \begin{tikzpicture}[baseline={([yshift=-.5ex]current bounding box.center)},scale=1]

    \node (z1) at (3.5,1) [] {$0$};
    \node (z0) at (7,1) [] {$0$};
    \node (z) at (9.25,1) [] {$0$};

    \node (topo) at (-2,0) [] {$0$};
    \node (palla) at (-2,-1) [] {$0$};
    \node (pollo) at (-2,-2) [] {$0$};

    \node (pippo) at (0,0) [] {$\ker(\nu^2)$};
    \node (a1) at (3.5,0) [] {$R(-m-1)^{b}$};
    \node (a0) at (7,0) [] {$R(-m)^{a}$};
    \node (f) at (9.25,0) [] {$\bF$};
    \node (f0) at (10.75,0) [] {$0$};

    \draw [->] (pippo) -- (a1);
    \draw [->] (z1) -- (a1);
    \draw [->] (z0) -- (a0);
    \draw [->] (z) -- (f);

    \draw [->] (a1) --node[above]{\footnotesize $A$} (a0);
    \draw [->] (a0) -- (f);
    \draw [->] (f) -- (f0);

    \node (b2) at (0,-1) [] {$J^2$};
    \node (b1) at (3.5,-1) [] {$R(-m-1)^{\alpha_1}$};
    \node (b0) at (7,-1) [] {$R(-m)^{\alpha_0}$};
    \node (e) at (9.25,-1) [] {$\bE$};
    \node (e0) at (10.75,-1) [] {$0$};

    \draw [->] (pippo) -- (b2);
    \draw [->] (a1) -- (b1);
    \draw [->] (a0) -- (b0);
    \draw [->] (f) -- (e);

    \draw [->] (b2) -- (b1);
    \draw [->] (b1) -- (b0);
    \draw [->] (b0) -- (e);
    \draw [->] (e) -- (e0);

    \node (c2) at (0,-2) [] {$K^2$};
    \node (c1) at (3.5,-2) [] {$R(-m-1)^{\gamma_1}$};
    \node (c0) at (7,-2) [] {$R(-m)^{\gamma_0}$};
    \node (m) at (9.25,-2) [] {$\bG$};
    \node (m0) at (10.75,-2) [] {$0$};

    \draw [->] (palla) -- (b2);
    \draw [->] (topo) -- (pippo);
    \draw [->] (pollo) -- (c2);

    \draw [->] (b1) --node[right]{\small $\mu^1$} (c1);
    \draw [->] (b0) --node[right]{\small $\mu^0$} (c0);
    \draw [->] (e) --node[right]{\small $\mu$} (m);

    \draw [->] (c2) -- (c1);
    \draw [->] (c1) -- (c0);
    \draw [->] (c0) -- (m);
    \draw [->] (m) -- (m0);

    \node (y1) at (3.5,-3) [] {$0$};
    \node (y0) at (7,-3) [] {$0$};
    \node (y) at (9.25,-3) [] {$0$};
    \node (z) at (0,-3) [] {$\coker(\nu^2)$};

    \draw [->] (c1) -- (y1);
    \draw [->] (c0) -- (y0);
    \draw [->] (m) -- (y);
    \draw [->] (c2) -- (z);
    \draw [->] (b2) --node[right]{\small $\nu^2$} (c2);

  \end{tikzpicture}
  \end{equation}

  Here, exactness of the diagram takes place everywhere except at
  $R(-m-1)^{b}$.
  We have also seen that the syzygy module for the generators $R(-m)^a \to \bF$
  is $\ker(\nu^1)$ and in turn this module fits into the exact sequence \eqref{snake2}.
  Equivalently, $\coker(\nu^2)$ is the homology at $R(-m-1)^{b}$ of the complex:
  \begin{equation}
    \label{nu2}
  0 \to \ker(\nu^2) \to R(-m-1)^b \to R(-m)^a \to \bF \to 0.    
  \end{equation}

  Recall that $\coker(\nu^2)$ has generators of degree $m+2$, so the
  linear part of the presentation of $\bF$ is the $a \times b$ matrix
  $A$ appearing in the diagram.
  Note that this holds independently of $E$ being locally free.

  Now since $\coker(\nu^2)$ is Artinian, specializing \eqref{nu2} and \eqref{diagrammozzo} to any closed point of $\PP^n$ we see that the matrix $A$
  presents $F=\ker(\tilde \mu)$ as a coherent sheaf over $\PP^n$.
 The fact that $A$ has constant corank $r-s$ follows. Indeed,
 the sheaves $E$ and $G$ are locally free and the
 induced map $\tilde \mu : E \to G$ is surjective, so also $F=\ker(\tilde
 \mu)$ is locally free, clearly of rank $r-s$. Also, 
 since $A$ presents $\bF$ modulo the Artinian module
 $\coker(\nu^2)$, we have
 $F \simeq \widetilde{\bF}$,  a locally free sheaf of rank $r-s$.
 In other words, $A$ has constant corank $r-s$.
\end{proof}

\begin{rem}
  We will mostly use this theorem in the case $G=0$, i.e.~when $\bG$
  is also Artinian. Of course, a way to guarantee that $\nu^2$ has
  Artinian cokernel is to assume that $\mu^2$ is surjective as
  in Theorem \ref{teorema A}.
\end{rem}

\section{A closer look at the modules $\bE$ and $\bG$}

Keeping in mind our goal of constructing explicit examples of constant rank matrices, we now want to investigate some  modules satisfying Theorem \ref{teorema A}'s and Theorem \ref{teorema B}'s hypotheses. In particular, 
we seek numerical ranges for Betti numbers $\beta_{0,m}$ and $\beta_{1,m+1}$ of modules $\bE$ and $\bG$. Once one determines two modules $\bE$ and $\bG$ 
that numerically provide the desired size of the presentation matrix of the module $\bF$, one can look explicitly for a surjective morphism $\mu$.

\subsection{The module $\bE$}
We start with a vector bundle $E$ or rank $r$ on $\PP^n$ and its
module $\HH^0_*(E) = \oplus_t \HH^0(E(t))$ of global sections.
For high enough $m$, the truncation $\HH^0_*(E)_{\geqslant m}$ is linearly presented. By Theorem \ref{teorema B}, the presentation matrix will indeed have corank $r$.

In what follows it will be useful to know the Betti numbers of all truncations of a module (without needing to determine them explicitly). 

\begin{lem}\label{lemma:nextTrunc}
Let $\bM$ be a finitely generated graded  $R$-module and let $m \ge \reg(\bM)$ be an integer. 
The truncated module $\bM_{\ge m}$ is $m$-regular, and assume that it has linear resolution:
\[
0 \rightarrow R(-m-n)^{\beta_{n,m+n}} \rightarrow \ \cdots\  \rightarrow R(-m-i)^{\beta_{i,m+i}} \rightarrow \ \cdots\ \rightarrow R(-m)^{\beta_{0,m}} \rightarrow \bM_{\ge m } \rightarrow 0.
\]
Then the truncated module $\bM_{\ge m+k}$, with $k \ge 1$, has regularity $m+k$ and linear resolution:
\[
0 \rightarrow R(-m-k-n)^{\beta_{n,m+n+k}} \rightarrow \ \cdots\ \rightarrow R(-m-k)^{\beta_{0,m+k}} \rightarrow \bM_{\ge m +k} \rightarrow 0
\]
with
\[
\beta_{i,m+i+k} = a^{(i)}_{k} \beta_{0,m} - a^{(i)}_{k-1} \beta_{1,m+1} + \ldots + (-1)^n a^{(i)}_{k-n} \beta_{n,m+n} = \sum_{j=0}^{n} (-1)^j a^{(i)}_{k-j} \beta_{j,m+j},
\]
where for all $i=0,\ldots,n$, the sequence $\big(a^{(i)}_k\big)$ belongs to the set of recursive sequences:
\[
\mbox{$\textnormal{RS}_n := \left\{ \left(\mathrm{a}_k\right) \ \Bigg\vert\ \mathrm{a}_{k+1} = \sum_{j=0}^{n} (-1)^j \binom{n+1}{j+1} \mathrm{a}_{k-j}\right\}.$}
\]
More in detail, $\big( a^{(i)}_k \big)$ is defined by the initial values:
\begin{equation}\label{eq:recSeq}
a^{(i)}_{1} = \binom{n+1}{i+1},\quad a^{(i)}_{-i} = (-1)^i \quad \text{and}\quad a^{(i)}_{-j} = 0 \text{ for } j \neq -1,i \text{ and } j < n.
\end{equation}
\end{lem}
\begin{proof}
The exact sequence $0 \rightarrow \bM_{\ge m+k+1} \rightarrow \bM_{\ge m+k} \rightarrow \bM_{m+k} \rightarrow 0$ induces the long exact sequence (see \cite[Theorem 38.3]{Peeva}):
\begin{equation}\label{eq:longTor}
\begin{tikzpicture}
\node (0) at (-0,0) [] {$0$};
\node (0M) at (-2,0) [] {$\Tor{\bM_{m+k}}{\kk}{0}{}$};
\node (0E) at (-5.35,0) [] {$\Tor{\bM_{\ge m+k}}{\kk}{0}{}$};
\node (0F) at (-9,0) [] {$\Tor{\bM_{\ge m+k+1}}{\kk}{0}{}$};
\node (1M) at (-2,1) [] {$\Tor{\bM_{m+k}}{\kk}{1}{}$};
\node (1E) at (-5.35,1) [] {$\Tor{\bM_{\ge m+k}}{\kk}{1}{}$};
\node (1F) at (-9,1) [] {$\Tor{\bM_{\ge m+k+1}}{\kk}{1}{}$};
\node (2M) at (-2,2) [] {$\Tor{\bM_{m+k}}{\kk}{2}{}$};
\node (2E) at (-5.35,2) [] {$\Tor{\bM_{\ge m+k}}{\kk}{2}{}$};
\node (1) at (-9,2) [] {$\quad\ \cdots\quad\ $};

\draw [->] (-7.5,2) -- (2E);
\draw [->] (2E) -- (2M);
\draw [->,rounded corners] (2M) -- (-0.25,2) -- (-0.25,1.5) -- (-11.,1.5) -- (-11.,1) -- (1F);
\draw [->] (1F) -- (1E);
\draw [->] (1E) -- (1M);
\draw [->,rounded corners] (1M) -- (-0.25,1) -- (-0.25,0.5) -- (-11.,0.5) -- (-11.,0) -- (0F);
\draw [->] (0F) -- (0E);
\draw [->] (0E) -- (0M);
\draw [->] (0M) -- (0);
\end{tikzpicture}
\end{equation}
where the modules $\Tor{\bullet}{\kk}{i}{}$ are graded of finite length and the dimensions of the homogeneous pieces are equal 
to the Betti numbers of the minimal free resolutions of the modules \cite[Theorem 11.2]{Peeva}. 
Hence, one determines the relation between Betti numbers of consecutive truncations working recursively with this sequence, 
keeping in mind that the modules $\bM_{\ge m+k}$ and $\bM_{m+k}$ have $(m+k)$-linear resolution while $\bM_{\ge m+k+1}$ has $(m+k+1)$-linear resolution. 
\end{proof}

By induction on $n$ one can also prove the following Lemma.

\begin{lem}\label{prop:poly}
Any recursive sequence $(\mathrm{a}_k) \in \textnormal{RS}_n$ has a degree $n$ polynomial as its generating function. In particular, 
the generating function of the sequence $\big(a_k^{(i)}\big)$ defined in \eqref{eq:recSeq} is the following:
\[
\mbox{$p_n^{(i)}(k) = \binom{n}{i}\binom{k+n-1}{n}\frac{k+n}{k+i}.$}
\]
\end{lem}

%
%
%

Remark that in the case $i=0$ one gets that $p_n^{(0)}(k) = \binom{k+n}{n}$.

\begin{ex}\label{ex:sectionLineBundleP2}
Consider the polynomial ring $R = \kk[x_0,x_1,x_2]$. As module over itself, $R$ is $0$-regular with resolution $0 \rightarrow R \rightarrow R \rightarrow 0$ ($\beta_{0,0}=1$, $\beta_{1,1}=0$, $\beta_{2,2}=0$).
By Lemma \ref{lemma:nextTrunc} and \ref{prop:poly}, the resolution of $R_{\geqslant k}$ for every $k \ge 0$ is
\[
0 \rightarrow R(-k-2)^{\beta_{2,k+2}} \rightarrow R(-k-1)^{\beta_{1,k+1}} \rightarrow R(-k)^{\beta_{0,k}} \rightarrow R_{\geqslant k} \rightarrow 0,
\]
where
\begin{equation}
\begin{split}
\beta_{0,k} &{}= a_{k}^{(0)}\beta_{0,0} - a_{k-1}^{(0)}\beta_{1,1} + a_{k-2}^{(0)} \beta_{2,2}= p_2^{(0)}(k) = \binom{k+2}{2} = \frac{1}{2}(k^2+3k+2),\\
\beta_{1,k+1} &{}=a_{k}^{(1)}\beta_{0,0} - a_{k-1}^{(1)}\beta_{1,1} + a_{k-2}^{(1)} \beta_{2,2}= p_2^{(1)}(k) = \binom{2}{1}\binom{k+1}{2}\frac{k+2}{k+1} = k^2 + 2k,\\
\beta_{2,k+2} &{}=a_{k}^{(2)}\beta_{0,0} - a_{k-1}^{(2)}\beta_{1,1} + a_{k-2}^{(2)} \beta_{2,2}= p_2^{(2)}(k) = \binom{2}{2}\binom{k+1}{2}\frac{k+2}{k+2} = \frac{1}{2}(k^2+k).
\end{split}
\end{equation}
For instance, we can predict the resolution of $R_{\ge 10}$:
\[
0 \rightarrow R(-12)^{55} \rightarrow R(-11)^{120} \rightarrow R(-10)^{66} \rightarrow R_{\geqslant 10} \rightarrow 0.
\] 
\end{ex}

\subsection{The module $\bG$}

As we pointed out at the end of \S \ref{main}, we will often consider Artinian modules for the module $\bG$, 
so that the corank of the presentation matrices of $\bE$ and $\bF$ is the same. 
Another advantage of using Artinian modules is that we can exploit many results from Boij-S\"oderberg theory about the 
Betti numbers of a module with given degrees of the maps in the complex, as well as explicit methods for the construction of such modules \cite{ESS}.

We recall some basic results about Artinian modules with pure resolution. We can use them as building blocks for general Artinian modules. 
The resolution of an Artinian module $\bG$ is called \emph{pure} with degree sequence $(d_0,\ldots,d_{n+1}),\ d_0<\cdots<d_{n+1}$ if it has the shape:
\[
0 \rightarrow R(-d_{n+1})^{\beta_{n+1,d_{n+1}}} \rightarrow \cdots \rightarrow R(-d_{1})^{\beta_{1,d_{1}}} \rightarrow R(-d_{0})^{\beta_{0,d_{0}}} \rightarrow \bG \rightarrow 0.  
\]
The Betti numbers of such a module solve the so-called \emph{Herzog-K\"uhl equations}:
\begin{equation}\label{eq:HK}
\mbox{$\beta_{i,d_i} = q \prod_{\begin{subarray}{c}j=0\\j\neq i\end{subarray}}^{n+1} \frac{1}{\vert d_j - d_i\vert}, \quad i = 0,\ldots,n+1,\quad \text{for some } q \in \mathbb{Q}.$}
\end{equation}
Whether or not an Artinian module with pure resolution exists has been discussed in several papers; a positive answer can be found in particular for special degree sequences \cite[Proposition 3.1]{BoijSoederberg1} or for special solutions of the Herzog-K\"uhl equations, see \cite[Theorem 5.1]{EisenbudSchreyer_gradedModules} and \cite[Theorem 0.1]{Eis_Floy_Wey_PureRes}.

Since we are interested in modules with linear presentation up to order 2, we may assume $d_0 =0$, $d_1 = 1$ and $d_2= 2$. In this case, the first three Betti numbers turn out to be
\begin{equation}\label{eq:bettiNumG}
\beta_{0,0} = \frac{q}{2 d_3 \cdots d_{n+1}},\qquad \beta_{1,1} = \frac{q}{(d_3-1) \cdots (d_{n+1}-1)},\qquad\beta_{2,2} = \frac{q}{2(d_3-2) \cdots (d_{n+1}-2)},
\end{equation}
where $q$ is a multiple of:
\[
\mbox{$\left(\prod_{\cramped{2<i<j\leqslant n+1}} \vert d_i-d_j\vert \right)\cdot\textsc{lcm}\big\{ d_i, d_i-1,d_i-2\ \vert\ i=3,\ldots,n+1\big\}.$}
\]
\section{Construction of special linear presentation of vector bundles}\label{vb}

Let us spell out clearly our strategy to construct linear
matrices of constant rank. Suppose that, for a given triple of
integers $(\rho,a,b)$ with $\rho < \min\{a, b\}$, we want to construct an
$a \times b$ matrix of linear forms of constant rank $\rho$ in the
polynomial ring with $n+1$ variables. Then
we have to search for an $a \times b$ matrix of linear forms presenting a
vector bundle $E$ of rank $r=a-\rho$ on $\PP^n$. In general, if $E$ is a
vector bundle of rank $r$, its module of global sections $\bE$ will not be
linearly presented. Nevertheless, we are free to truncate $\bE$ in such a way that it is linearly
presented. By Theorem \ref{teorema B}, the presentation matrix will indeed have corank $r$. However, it
is unlikely that its size equals $a\times b$. Here is where Theorem \ref{teorema A} and
Theorem \ref{teorema B} come into play, as we may remove a 2-linearly presented
Artinian module from our truncation of $\bE$ in
order to reduce the size of our presentation matrix, and hopefully arrive at size $a \times b$.

Note that, given $E$, there are infinitely many truncations of
$\bE = \HH^0_*(E)$. For each truncation $\bE_{\ge k}$: firstly, one can look at 
the finitely many Artinian modules with pure resolution and Betti numbers compatible with the assumptions of Theorem \ref{teorema A}, 
and secondly one needs to look for a surjective homomorphism $\mu$ inducing a surjective map $\mu^1$ and a map $\nu^2$ with Artinian cokernel. Moreover, we can repeat
the same procedure for each module with linear presentation obtained in this way, and so on.

We illustrate these possibilities with a tree structure, that the reader can find in Figure \ref{fig:tree} and then again in Figures \ref{fig:Steiner}-\ref{fig:tangoHM}:
\begin{itemize}
\item the root of the tree is the truncation $\bE_{\ge k}$ for some $k$ of the module $\bE = \HH^0_*(E)$ of global sections of a vector bundle $E$. In Figures \ref{fig:Steiner}-\ref{fig:tangoHM}, we specify the dimension of the presentation matrix (the map $e_1$ in \eqref{eq:resE}) and its rank; in Figure \ref{fig:tree}, we specify the Betti numbers of the complete resolution in order to better illustrate the role of all maps $\mu^0$, $\mu^1$, $\mu^2$ and $\nu^2$ from Theorems \ref{teorema A} and \ref{teorema B}. We adopt the standard notation for Betti tables (see e.g.~\cite{BoijSoederberg1}).
\item The arrows starting from each root represent an application of
  Theorem \ref{teorema B}: more precisely, we used the computer
  algebra system {\tt Macaulay2} to compute all the solutions to the Herzog-K\"uhl equations \eqref{eq:HK} such that an Artinian module with given pure resolution exists and the map $\mu^1$ can be surjective. In Figure \ref{fig:tree}, we give even more details by labeling the arrows with the Betti table of the resolution of the Artinian modules.
\item All the arrows starting from the root end in a node; such node is either the module $\bF$ obtained as kernel a random homomorphism $\mu$ from $\bE_{\ge k}$ to a random Artinian module $\bG$, just like in Theorem \ref{teorema B}, or a failure message when the map $\nu^2$ associated to a random morphism does not have Artinian cokernel. Again, in Figure \ref{fig:tree} the modules are described by their Betti tables, while in Figures \ref{fig:Steiner}-\ref{fig:tangoHM} we report the dimension and rank of the presentation matrix.
\item If the module $\bF$ is linearly presented up to order 2, the procedure can be repeated; this is illustrated in
our figures any time another arrow starts from a node different from
the root, with the same notation as above.
\item Each linear matrix of constant rank appearing as outcome of the
  algorithms is labeled either by its size and rank $(a \times b,
  \rho)$ or by the leftmost part of a give Betti table, the corank
  beign the rank of the vector bundle. For instance Figure
  \ref{fig:tree} is associated with $T_{\PP^2}$, so the corank is $2$, hence at the end of the first line we see a linear matrix
  of size $20 \times 26$ of constant corank $2$.
\end{itemize}
The reduction process represented by a sequence of arrows can be
thought as equivalent to a single step of reduction done using the
module $\bG$ obtained as direct sum of the modules corresponding to
the arrows in the sequence. Thus, we simplify the tree by not allowing
paths corresponding to permutations of the arrows or paths with
multiple arrows corresponding to resolutions with the same degree
sequence. 

\bigskip

The matrices we find are associated with different kinds of
bundles, so let us summarize here the outcome of our search for examples.
We start in \S \ref{lineBundle} with the case of line
bundles. In \S \ref{steiner} we use Steiner bundles on $\PP^n$,
i.e.~bundles with a 
two-step resolution. Here the corank is relatively high, as Steiner
bundles must have rank at least $n$.

We then start the study of matrices having corank smaller than $n$---in \S \ref{comparison} we will call these matrices of ``small'' corank.
In \S \ref{instantons} and \ref{varie} we produce many matrices arising
from instantons and other kinds of bundles. The hardest examples to
  construct  are matrices of big size, having corank smaller
  than the dimension of the ambient space, and of shape close to being
  square. We highlighted in boldface the examples that seem most
  interesting to us, namely those of $a\times b$ matrices  of
 constant rank $r$ with small $b-r$ and $a-r$.

\renewcommand{\arraycolsep}{0.1cm}
\begin{figure}[!ht] 
\begin{center} 
\begin{tikzpicture}[scale=1,>=latex]
\node (0) at (0,0) [inner sep=2pt] {\parbox{1.6cm}{\centering $\bE_{\ge 2}$\\ \tiny $\begin{array}{|ccc}\hline {24} & {37} & 15 \end{array}$}};

\node (00) at (5,0) [inner sep=2pt] {\tiny $\begin{array}{|ccc}\hline {23} & {34} & 12 \\ \centerdot & \centerdot & 1\end{array}$};

\draw [->] (0) -- (1,0) --node[above,inner sep=0.5pt]{\tiny $\begin{array}{|cccc}\hline 1 & 3 & 3 & 1\end{array}$} (00);

\node (000) at (10,0) [inner sep=2pt] {\tiny $\begin{array}{|ccc}\hline {20} & {26} & 6 \\ \centerdot & \centerdot & 1 \\ \centerdot & \centerdot & 1\end{array}$};

\draw [->] (00) -- (6,0) --node[above,inner sep=0.5pt]{\tiny $\begin{array}{|cccc}\hline 3 & 8 & 6 & \centerdot \\ \centerdot &\centerdot &\centerdot & 2\end{array}$} (000);

\node (001) at (10,-1) [inner sep=2pt] {\tiny $\begin{array}{|ccc}\hline \mathbf{17} & \mathbf{18} & \centerdot \\ \centerdot & \centerdot & 1 \\ \centerdot & \centerdot & 2\end{array}$};

\draw [->] (00) -- (6,-1) --node[above,inner sep=0.5pt]{\tiny $\begin{array}{|cccc}\hline 6 & 16 & 12 & \centerdot \\ \centerdot &\centerdot &\centerdot & 2\end{array}$} (001);

\node (002) at (10,-2.1) [inner sep=2pt] {\tiny  $\begin{array}{|ccc}\hline {17} & {19} & 2 \\ \centerdot & \centerdot & 1 \\ \centerdot & \centerdot & \centerdot \\ \centerdot & \centerdot & 1\end{array}$};

\draw [->] (00) -- (6,-2.1) --node[above,inner sep=0.5pt]{\tiny $\begin{array}{|cccc}\hline 6 & 15 & 10 & \centerdot \\ \centerdot & \centerdot & \centerdot & \centerdot \\ \centerdot &\centerdot &\centerdot & 1\end{array}$} (002);

\node (01) at (5,-3.2) [inner sep=2pt] {\tiny $\begin{array}{|ccc}\hline {22} & {31} & 9 \\ \centerdot & \centerdot & 2\end{array}$};

\draw [->] (0) -- (1,-3.2) --node[above,inner sep=0.5pt]{\tiny $\begin{array}{|cccc}\hline 2 & 6 & 6 & 2\end{array}$} (01);

\node (010) at (10,-3.2) [inner sep=2pt] {\tiny $\begin{array}{|ccc}\hline {19} & {23} & 3 \\ \centerdot & \centerdot & 2 \\ \centerdot & \centerdot & 1\end{array}$};

\draw [->] (01) -- (6,-3.2) --node[above,inner sep=0.5pt]{\tiny $\begin{array}{|cccc}\hline 3 & 8 & 6 & \centerdot \\ \centerdot &\centerdot &\centerdot & 1\end{array}$} (010);

\node (011) at (10.1,-4.05) [inner sep=2pt] {\parbox{1.6cm}{\centering\tiny $\nu^2$ does not have Artinian cokernel}};

\draw [->] (01) -- (6,-4.05) --node[above,inner sep=0.5pt]{\tiny $\begin{array}{|cccc}\hline 6 & 16 & 12 & \centerdot \\ \centerdot &\centerdot &\centerdot & 2\end{array}$} (011);

\node (012) at (10.2,-5.3) [inner sep=2pt] {\tiny  $\begin{array}{|cccc}\hline \mathbf{16} & \mathbf{16} & \centerdot & \centerdot \\ \centerdot & \centerdot & \centerdot & \centerdot \\ \centerdot & \centerdot & 1 & \centerdot \\ \centerdot & \centerdot & \centerdot & \centerdot \\ \centerdot & \centerdot & 2 & 1\end{array}$};

\draw [->] (01) -- (6,-5.3) --node[above,inner sep=0.5pt]{\tiny $\begin{array}{|cccc}\hline 6 & 15 & 10 & \centerdot \\ \centerdot & \centerdot & \centerdot & \centerdot \\ \centerdot &\centerdot &\centerdot & 1\end{array}$} (012);

\node (02) at (5,-6.5) [inner sep=2pt] {\tiny $\begin{array}{|ccc}\hline {21} & {28} & 6 \\ \centerdot & \centerdot & 3\end{array}$};

\draw [->] (0) -- (1,-6.5) --node[above,inner sep=0.5pt]{\tiny $\begin{array}{|cccc}\hline 3 & 9 & 9 & 3\end{array}$} (02);

\node (020) at (10,-6.5) [inner sep=2pt] {\tiny $\begin{array}{|ccc}\hline {18} & {20} & \centerdot \\ \centerdot & \centerdot & 3 \\ \centerdot & \centerdot & 1\end{array}$};

\draw [->] (02) -- (6,-6.5) --node[above,inner sep=0.5pt]{\tiny $\begin{array}{|cccc}\hline 3 & 8 & 6 & \centerdot \\ \centerdot &\centerdot &\centerdot & 1\end{array}$} (020);

\node (021) at (10.1,-7.4) [inner sep=2pt] {\parbox{1.6cm}{\centering\tiny $\nu^2$ does not have Artinian cokernel}};

\draw [->] (02) -- (6,-7.4) --node[above,inner sep=0.5pt]{\tiny $\begin{array}{|cccc}\hline 6 & 15 & 10 & \centerdot \\ \centerdot & \centerdot & \centerdot & \centerdot \\ \centerdot &\centerdot &\centerdot & 1\end{array}$} (021);

\node (03) at (5,-8.3) [inner sep=2pt] {\tiny $\begin{array}{|ccc}\hline {20} & {25} & 3 \\ \centerdot & \centerdot & 4\end{array}$};

\draw [->] (0) -- (1,-8.3) --node[above,inner sep=0.5pt]{\tiny $\begin{array}{|cccc}\hline 4 & 12 & 12 & 4\end{array}$} (03);

\node (030) at (10.2,-8.3) [inner sep=2pt] {\parbox{1.6cm}{\centering\tiny $\nu^2$ does not have Artinian cokernel}};

\draw [->] (03) -- (6,-8.3) --node[above,inner sep=0.5pt]{\tiny $\begin{array}{|cccc}\hline 3 & 8 & 6 & \centerdot \\ \centerdot &\centerdot &\centerdot & 1\end{array}$} (030);

\node (04) at (5,-9.2) [inner sep=2pt] {\tiny $\begin{array}{|ccc}\hline {19} & {22} & \centerdot \\ \centerdot & \centerdot & 5\end{array}$};

\draw [->] (0) -- (1,-9.2) --node[above,inner sep=0.5pt]{\tiny $\begin{array}{|cccc}\hline 5 & 15 & 15 & 5\end{array}$} (04);

\node (040) at (10.1,-9.2) [inner sep=2pt] {\parbox{1.6cm}{\centering\tiny $\nu^2$ does not have Artinian cokernel}};

\draw [->] (04) -- (6,-9.2) --node[above,inner sep=0.5pt]{\tiny $\begin{array}{|cccc}\hline 3 & 8 & 6 & \centerdot \\ \centerdot &\centerdot &\centerdot & 1\end{array}$} (040);

\node (05) at (5.2,-10.3) [inner sep=2pt] {\tiny $\begin{array}{|cccc}\hline \mathbf{18} & \mathbf{19} & \centerdot & \centerdot \\ \centerdot & \centerdot & \centerdot & \centerdot \\ \centerdot & \centerdot & \centerdot & \centerdot \\ \centerdot & \centerdot & 6 & 3 \end{array}$};

\draw [->] (0) -- (1,-10.3) --node[above,inner sep=0.5pt]{\tiny $\begin{array}{|cccc}\hline 6 & 18 & 18 & 6\end{array}$} (05);

\node (06) at (5.1,-11.3) [inner sep=2pt] {\parbox{1.6cm}{\centering\tiny $\nu^2$ does not have Artinian cokernel}};

\draw [->] (0) -- (1,-11.3) --node[above,inner sep=0.5pt]{\tiny $\begin{array}{|cccc}\hline 7 & 21 & 21 & 7\end{array}$} (06);

\node (07) at (5,-12.3) [inner sep=2pt] {\tiny $\begin{array}{|ccc}\hline {21} & {29} & 9 \\ \centerdot & \centerdot & \centerdot \\ \centerdot & \centerdot & 1 \end{array}$};

\draw [->] (0) -- (1,-12.3) --node[above,inner sep=0.5pt]{\tiny $\begin{array}{|cccc}\hline 3 & 8 & 6 & \centerdot \\ \centerdot & \centerdot & \centerdot & 1\end{array}$} (07);

\node (070) at (10.1,-12.3) [inner sep=2pt] {\parbox{1.6cm}{\centering\tiny $\nu^2$ does not have Artinian cokernel}};

\draw [->] (07) -- (6,-12.3) --node[above,inner sep=0.5pt]{\tiny $\begin{array}{|cccc}\hline 6 & 15 & 10 & \centerdot \\ \centerdot & \centerdot & \centerdot & \centerdot \\ \centerdot &\centerdot &\centerdot & 1\end{array}$} (070);

\node (08) at (5,-13.3) [inner sep=2pt] {\tiny $\begin{array}{|ccc}\hline {18} & {21} & 3 \\ \centerdot & \centerdot & \centerdot \\ \centerdot & \centerdot & 2 \end{array}$};

\draw [->] (0) -- (1,-13.3) --node[above,inner sep=0.5pt]{\tiny $\begin{array}{|cccc}\hline 6 & 16 & 12 & \centerdot \\ \centerdot & \centerdot & \centerdot & 2\end{array}$} (08);

\node (09) at (5.1,-14.2) [inner sep=2pt] {\parbox{1.6cm}{\centering\tiny $\nu^2$ does not have Artinian cokernel}};

\draw [->] (0) -- (1,-14.2) --node[above,inner sep=0.5pt]{\tiny $\begin{array}{|cccc}\hline 9 & 24 & 18 & \centerdot \\ \centerdot & \centerdot & \centerdot & 3\end{array}$} (09);

\node (0-10) at (5,-15.3) [inner sep=2pt] {\tiny $\begin{array}{|ccc}\hline {18} & {22} & 5 \\ \centerdot & \centerdot & \centerdot \\ \centerdot & \centerdot & \centerdot \\ \centerdot & \centerdot & 1 \end{array}$};

\draw [->] (0) -- (1,-15.3) --node[above,fill=white,inner sep=0pt]{\tiny $\begin{array}{|cccc}\hline 6 & 15 & 10 & \centerdot \\ \centerdot & \centerdot & \centerdot & \centerdot \\ \centerdot & \centerdot & \centerdot & 1\end{array}$} (0-10);

\node (0-11) at (5,-16.6) [inner sep=2pt] {\tiny $\begin{array}{|ccc}\hline \mathbf{14} & \mathbf{13} & \centerdot \\ \centerdot & \centerdot & \centerdot \\ \centerdot & \centerdot & \centerdot \\ \centerdot & \centerdot & \centerdot \\ \centerdot & \centerdot & 1 \end{array}$};

\draw [->] (0) -- (1,-16.6) --node[above,inner sep=0.5pt]{\tiny $\begin{array}{|cccc}\hline 10 & 24 & 15 & \centerdot  \\ \centerdot & \centerdot & \centerdot & \centerdot \\ \centerdot & \centerdot & \centerdot & \centerdot \\  \centerdot & \centerdot & \centerdot & 1\end{array}$} (0-11);

\end{tikzpicture}
\end{center}
\caption{\label{fig:tree} The set of constant rank matrices that can be produced with our method applied to the truncation $\bE_{\ge 2} = \oplus_{t \ge 2} \HH^0\big(\T_{\PP^2}(t-1)\big)$ the module of global sections of a twist of the tangent bundle $E=\T_{\PP^2}(-1)$.}
\end{figure}
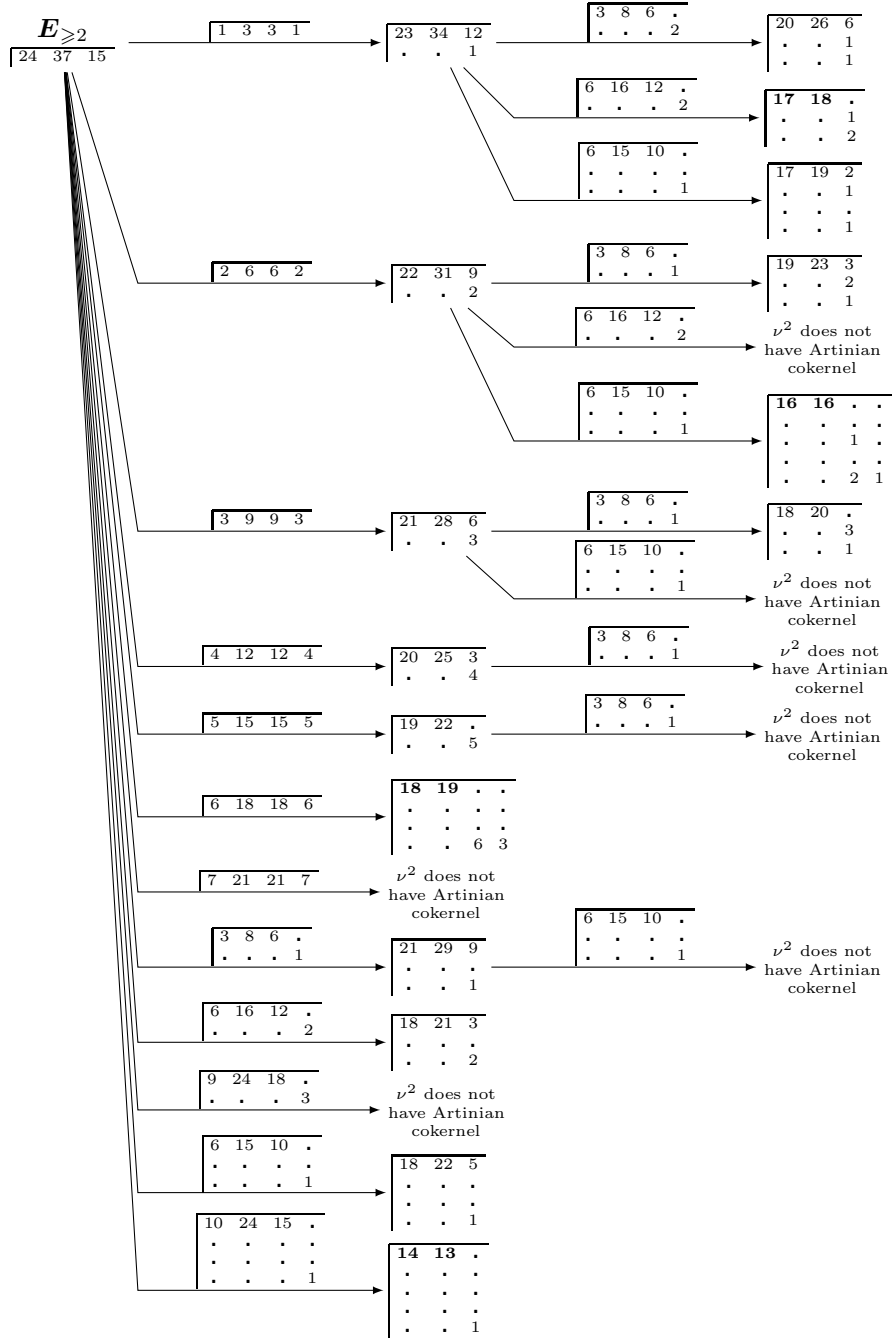

\subsection{Line bundles} \label{lineBundle}

To give a first application of Theorem \ref{teorema B}, we show how to produce a subspace of dimension 3 of $(2s+1)\times (2s+1)$ matrices of constant rank $2s$. Such spaces are an example of those determined by Westwick in  \cite{Westwick_5}; we will illustrate more details on his method in \S \ref{comparison}.

We consider as $\bE$ the module of sections of a
line bundle $\mathcal{O}_{\PP^2}(l)$ over $\PP^2$. The resolution of the general truncation $\bE_{\ge k}$ was described in Example \ref{ex:sectionLineBundleP2}. As for $\bG$, we consider pure Artinian modules with degree sequence $(k,k+1,k+2,k+d)$; in this case, the general solution of the Herzog-K\"uhl equations is
\[
\beta_{0,k}(\bG) = q\frac{d^2-3d+2}{2},\quad \beta_{1,k}(\bG) = q(d^2 - 2d),\quad \beta_{2,k}(\bG) = q\frac{d^2-d}{2}, \quad \beta_{3,k}(\bG) = q.
\]
Hence, we look for positive integers $k,d,q$ such that
\[
\begin{split}
&\beta_{0,k}(\bE_{\ge k}) - \beta_{0,k}(\bG) = \frac{k^2+3k+2}{2}-q\frac{d^2-3d+2}{2} = 2s+1,\\
&\beta_{1,k}(\bE_{\ge k}) - \beta_{1,k}(\bG) = k^2+2k - q(d^2-2d)= 2s+1.
\end{split}
\]
It is easy to show that for every $s$, we have the solution $k = s$, $d = s+1$ and $q=1$, and a module with such resolution always exists (see for instance \cite[Theorem 5.1]{EisenbudSchreyer_gradedModules}). Finally, one has to determine a morphism $\mu : \bE_{\ge k} \to \bG$ satisfying the assumption of Theorem \ref{teorema B}. 
Figure \ref{fig:WestickP2s2} shows what happens in the case $s=2$. Then, the morphism $\mu: \bE_{\ge 2} \to \bG$ defined by the map between the generators 
$R(-2)^6 \xrightarrow{\tiny [0\ 0\ -1 \ 1 \ 0 \ 0]} R(-2)$ satisfies the hypotheses of Theorems \ref{teorema A} and \ref{teorema B} 
so that the presentation of $\ker \mu$ is a $5\times 5$ matrix of constant rank $4$ that looks exactly like the matrix $H_{2,2}$ defined in \cite{Westwick_5}:
\[
\left[\begin{array}{ccccc}
-x_1 & -x_2 & 0    & 0    & 0  \\
x_0  & -x_1 & -x_2 & 0    & 0  \\
0    &  x_0 & 0    & -x_2 & 0   \\
0    & 0    & x_0  & x_1  & -x_2 \\
0    & 0    & 0    & x_0  & x_1 \\
\end{array}\right].
\]

\begin{figure}[!ht]
\begin{center}
\begin{tikzpicture}
\node (E2) at (0,3) [] {$R(-4)^3$};
\node (G2) at (11,3) [] {$R(-4)^3$};

\draw [->] (E2) --node[rectangle,fill=white,inner sep=1pt]{\tiny
$\left[\begin{array}{cccccccc}
  \mu_3 &  \mu_4 &  \mu_5 \\ 
 -\mu_1 & -\mu_2 & -\mu_4 \\ 
  \mu_0 &  \mu_1 &  \mu_3 \\ 
\end{array}\right]$
} (G2);

\node (E1) at (0,-1) [] {$R(-3)^8$};
\node (G1) at (11,-1) [] {$R(-3)^3$};

\draw [->] (E2) --node[right]{\tiny $\left[\begin{array}{ccc}
x_2  &    0 & 0 \\
0    &  x_2 & 0 \\
-x_1 &    0 & 0 \\
x_0  &    0 & -x_2 \\
0    & -x_1 & x_2 \\
0    &  x_0 & 0 \\
0    &    0 & -x_1 \\
0    &    0 & x_0 \\
\end{array}\right]$} (E1);

\draw [->] (G2) --node[left]{\tiny $\left[\begin{array}{ccc}
-x_1 & -x_2 &   0 \\
 x_0 &    0 & -x_2 \\
   0 &  x_0 &  x_1 \\
\end{array}\right]$} (G1);

\draw [->] (E1) --node[rectangle,fill=white,inner sep=1pt]{\tiny
$\left[\begin{array}{cccccccc}
  \mu_1 &  \mu_2 &  \mu_3 &      0 &  \mu_4 &      0 &  \mu_5 &      0 \\ 
 -\mu_0 & -\mu_1 &      0 &  \mu_3 &      0 &  \mu_4 &      0 &  \mu_5 \\ 
      0 &      0 & -\mu_0 & -\mu_1 & -\mu_1 & -\mu_2 & -\mu_3 & -\mu_4 \\ 
\end{array}\right]$
} (G1);

\node (E0) at (0,-4) [] {$R(-2)^6$};
\node (G0) at (11,-4) [] {$R(-2)$};

\draw [->] (E0) --node[rectangle,fill=white,inner sep=1pt]{\tiny
$\left[\begin{array}{cccccc}
\mu_0 & \mu_1 & \mu_2 & \mu_3 & \mu_4 & \mu_5\end{array}\right]$
} (G0);

\draw [->] (E1) --node[right]{\tiny $\left[\begin{array}{cccccccc} 
-x_1 &    0 & -x_2 &    0 &    0 &    0 &    0 &    0\\ 
 x_0 & -x_1 &    0 & -x_2 & -x_2 &    0 &    0 &    0\\
   0 &  x_0 &    0 &    0 &    0 & -x_2 &    0 &    0\\
   0 &    0 &  x_0 &  x_1 &    0 &    0 & -x_2 &    0\\
   0 &    0 &    0 &    0 &  x_0 &  x_1 &    0 & -x_2\\
   0 &    0 &    0 &    0 &    0 &    0 &  x_0 &  x_1\\
\end{array}\right]$} (E0);
\draw [->] (G1) --node[left]{\tiny $\left[\begin{array}{ccc} x_0 & x_1 & x_2 \end{array}\right]$} (G0);

\node (E) at (0,-6) [] {$\bE_{\ge 2}$};
\node (G) at (11,-6) [] {$\bG$};

\draw [->] (E0) --node[right]{\tiny $\left[\begin{array}{cccccc}  x_0^2 & x_0 x_1 & x_1^2 & x_0x_2 & x_1x_2 & x_2^2 \end{array}\right]$} (E);
\draw [->] (G0) -- (G);

\draw [->] (E) --node[above]{$\mu$} (G);

\node (zE) at (0,-7) [] {$0$};
\node (zG) at (11,-7) [] {$0$};

\draw [->] (E) -- (zE);
\draw [->] (G) -- (zG);

\end{tikzpicture}
\end{center}
\caption{\label{fig:WestickP2s2} Description of the general morphism $\mu$ between the module $\bE_{\ge 2}$, where $\bE = R = \boldsymbol{k}[x_0,x_1,x_2]$, and $\bG = \boldsymbol{k}(-2)$. Any non-zero morphism $\mu$ is surjective. If $\mu^{-1}(1) = \langle x_i^2\rangle$ for some $i$, the morphisms $\mu^1$ and $\mu^2$ are not surjective and the presentation of $\ker \mu$ is $R(-4)^2 \oplus R(-3)^6 \to R(-3) \oplus R(-2)^5$. If $\mu^{-1}(1) = \langle x_i x_j\rangle$, $i\neq j$, then $\mu^1$ is surjective but $\mu^2$ is not, and the presentation turns out to be $R(-4) \oplus R(-3)^5 \to R(-2)^5 \to \ker \mu$. Finally, for a generic morphism $\mu$, both $\mu^1$ and $\mu^2$ are surjective and $\ker \mu$ is linearly presented ($R(-3)^5 \to R(-2)^5$).}
\end{figure}

\subsection{Steiner bundles, linear resolutions and generalizations} \label{steiner}

In \lq\lq classical\rq\rq\ literature a vector bundle $E$ on $\PP^n$ having a linear resolution of the form:
\[
	0 \rightarrow \OO_{\PP^n}(-1)^s \rightarrow \OO_{\PP^n}^{s+r} \rightarrow E \rightarrow 0,
\]
with $s \ge 1$ and $r \ge n$ integers, is called a (rank $r$) \emph{Steiner bundle}, see \cite{Dolgachev_Kapranov,Brambilla_Fibonacci} among many other references. 
This motivates the following definition. 

\begin{defn}\label{def:Steiner}
Let $r\ge n$ and $s \ge 1$ be integers. The cokernel $E_{s,r}^{(m)}$
of an everywhere injective morphism:
\[
\OO_{\PP^n}(-m-1)^s \xrightarrow{\phi} \OO_{\PP^n}^{s+r}
\]
is a vector bundle on $\PP^n$, that we call a \emph{Steiner
  bundle}. Because of the assumption $r \ge n$, the matrices $\phi$
which are not everywhere injective form a proper closed subset of all
matrices $\phi$.
\end{defn}

``Classical'' Steiner bundles are of the form $E_{s,r}^{(0)}$. Note that these are not Steiner bundles in the sense of \cite{RMMR_Soares}, unless $m < n$. 
Given a (generalized) Steiner bundle, the graded module $\bE^{(m)} := \HH^0_{\ast}(E^{(m)}_{s,r})$ has the following resolution:
\begin{equation}\label{eq:resSteiner}
0 \rightarrow R(-m-1)^s \rightarrow R^{s+r} \rightarrow \bE^{(m)} \rightarrow 0,
\end{equation}
and its regularity is exactly equal to $m$. Remark though that the module $\bE^{(m)}$ does not fit the bill for our purposes: 
first of all for $m \neq 0$ its resolution is not linear, and even in the case $m=0$ one has $\Tor{\bE^{(0)}}{\kk}{2}{2} = 0$. For this reason 
we need to truncate it in higher degree, as explained in the following lemma.

\begin{lem}\label{lemma:bettiSteiner}
Let $E^{(m)}_{s,r}$ be a rank $r$ Steiner bundles, with graded modules of sections 
$\bE^{(m)} = \HH^0_{\ast}(E^{(m)}_{s,r})$. The linear resolution of the truncation 
$\bE^{(m)}_{\ge m}$ is of the form:
\begin{equation*}
0 \rightarrow \ \cdots\  \rightarrow R(-m-i)^{\alpha^{(m)}_{i,m}} \rightarrow \ \cdots\ \rightarrow R(-m)^{\alpha^{(m)}_{0,m}} \rightarrow \bE^{(m)}_{\ge m} \rightarrow 0
\end{equation*}
where:
\begin{equation}\label{eq:bettiSteiner}
\alpha_{i,m}^{(m)} = p_n^{(i)}(m) (s+r) - p_n^{(i)}(-1)s.
\end{equation}
\end{lem}
\begin{proof}
Looking at the homogeneous piece of degree $m+i$ of the resolution of $\bE^{(m)}_{\ge m}$:
\[
0 \rightarrow \ \cdots\  \rightarrow R(-m-i)_{m+i}^{\alpha^{(m)}_{i,m}} \rightarrow \ \cdots\ \rightarrow R(-m)_{m+i}^{\alpha^{(m)}_{0,m}} \rightarrow \big(\bE^{(m)}_{\ge m}\big)_{m+i} \rightarrow 0,
\]
we deduce that:
\[
\alpha_{i,m}^{(m)} = \sum_{j=1}^i (-1)^{j-1}\tbinom{n+j}{n}\alpha_{i-j,m}^{(m)} + (-1)^i \dim_{\kk} \big(\bE^{(m)}_{\ge m}\big)_{m+i}.
\]
Since $\big(\bE^{(m)}_{\ge m}\big)_{m+i} = \big(\bE^{(m)}\big)_{m+i}$, for all $i\ge 0$, we compute the dimension of the homogeneous piece of degree $m+i$ of $\bE^{(m)}_{\ge m}$ from the simpler resolution \eqref{eq:resSteiner}:
\[
\dim_{\kk} \big(\bE^{(m)}_{\ge m}\big)_{m+i} = \dim_{\kk} \big(\bE^{(m)}\big)_{m+i} = \tbinom{n+m+i}{n}(s+r) - \tbinom{n+i-1}{n} s.
\]
We now proceed by induction on $i$; for $i=0$, we have:
\[
\alpha_{0,m}^{(m)} = \dim_{\kk} \big(\bE^{(m)}_{\ge m}\big)_{m} = \tbinom{n+m}{n}(s+r) - \tbinom{n-1}{n} s = p_n^{(0)}(m)(s+r) - p_n^{(0)}(-1)s.
\]
By inductive hypothesis, \eqref{eq:bettiSteiner} holds for $0,\ldots,i-1$. We get:
\[
\begin{split}
\alpha_{i,m}^{(m)} ={}& \sum_{j=1}^i (-1)^{j-1}\tbinom{n+j}{n}\left(p_n^{(i-j)}(m) (s+r) - p_n^{(i-j)}(-1)s\right) + {}\\ &(-1)^i \left(\tbinom{n+m+i}{n}(s+r) - \tbinom{n+i-1}{n} s\right) = {}\\
 ={}& \Big[\sum_{j=1}^i (-1)^{j-1}\tbinom{n+j}{n} p_n^{(i-j)}(m) + (-1)^i \tbinom{n+m+i}{n}\Big](s+r) - {}\\
  & \Big[\sum_{j=1}^i (-1)^{j-1}\tbinom{n+j}{n} p_n^{(i-j)}(-1) + (-1)^i \tbinom{n+i-1}{n}\Big]s.
\end{split}
\]
The result follows from the observation that the univariate polynomial $p_{n}^{(i)}(k)$ coincides with:
\[
\sum_{j=1}^i (-1)^{j-1}\tbinom{n+j}{n} p_n^{(i-j)}(k) + (-1)^i \tbinom{n+k+i}{n},
\]
because both polynomials have degree $n$ and take the same value at $k=0,-1,\ldots,$ $-n$.
\end{proof}

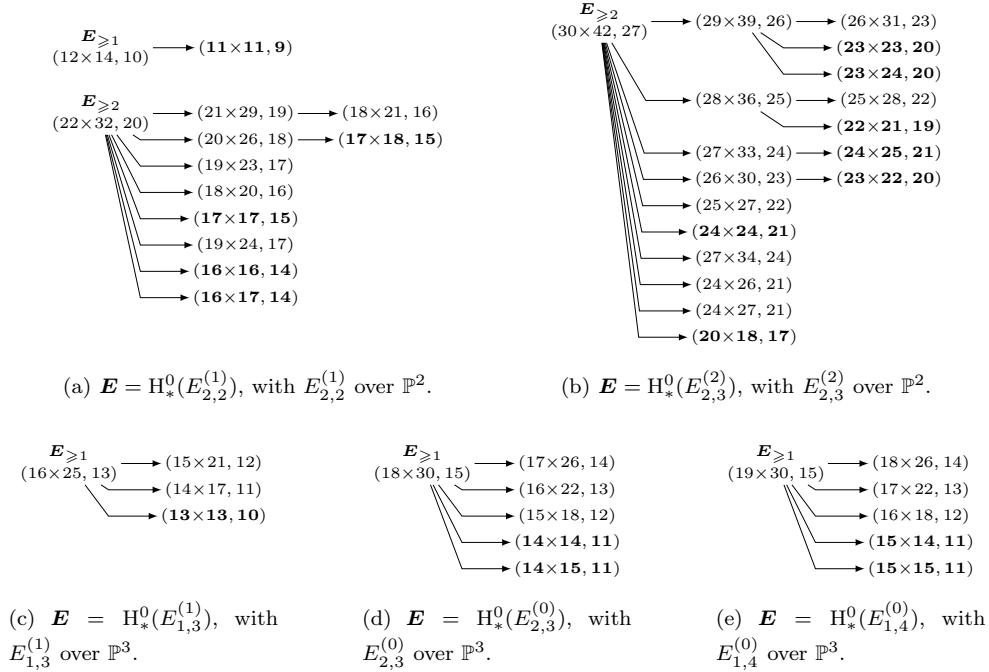
\begin{figure}[ht!]
\subfloat[][$\bE = \HH^0_{\ast}(E_{2,2}^{(1)})$, with $E_{2,2}^{(1)}$ over $\PP^2$.]
{
\begin{tikzpicture}[scale=0.7,>=latex]
\node (t0) at (1.5,0.5) [inner sep=1pt] {\tiny \parbox{1.3cm}{\centering $\bE_{\ge 1}$ $(12{\times} 14,10)$}};

\node (m0) at (4.25,0.5) [inner sep=1pt] {\tiny $\mathbf{(11 {\times} 11,9)}$};
\draw [->,very thin] (t0) -- (m0);

\node (t1) at (1.5,-0.75) [inner sep=1pt] {\tiny \parbox{1.3cm}{\centering $\bE_{\ge 2}$ $(22{\times} 32,20)$}};

\node (m1) at (4.25,-0.75) [inner sep=1pt] {\tiny $(21 {\times} 29,19)$};
\draw [->,very thin] (t1) -- (m1);

\node (m11) at (7,-0.75) [inner sep=1pt] {\tiny $(18 {\times} 21,16)$};
\draw [->,very thin] (m1) -- (m11);

\node (m2) at (4.25,-1.25) [inner sep=1pt] {\tiny $(20 {\times} 26,18)$};
\draw [->,very thin] (t1) -- (2.25,-1.25) -- (m2);

\node (m21) at (7,-1.25) [inner sep=1pt] {\tiny $\mathbf{({17} {\times} {18},15)}$};
\draw [->,very thin] (m2) -- (m21);

\node (m3) at (4.25,-1.75) [inner sep=1pt] {\tiny $(19 {\times} 23,17)$};
\draw [->,very thin] (t1) -- (2.25,-1.75) -- (m3);

\node (m4) at (4.25,-2.25) [inner sep=1pt] {\tiny $(18 {\times} 20,16)$};
\draw [->,very thin] (t1) -- (2.25,-2.25) -- (m4);
\node (m5) at (4.25,-2.75) [inner sep=1pt] {\tiny $\mathbf{(17 {\times} 17,15)}$};
\draw [->,very thin] (t1) -- (2.25,-2.75) -- (m5);
\node (m6) at (4.25,-3.25) [inner sep=1pt] {\tiny $(19 {\times} 24,17)$};
\draw [->,very thin] (t1) -- (2.25,-3.25) -- (m6);
\node (m7) at (4.25,-3.75) [inner sep=1pt] {\tiny $\mathbf{(16 {\times} 16,14)}$};
\draw [->,very thin] (t1) -- (2.25,-3.75) -- (m7);
\node (m8) at (4.25,-4.25) [inner sep=1pt] {\tiny $\mathbf{(16 {\times} 17,14)}$};
\draw [->,very thin] (t1) -- (2.25,-4.25) -- (m8);
\node at (2,-5.25) [] {};
\end{tikzpicture}
}
\hspace{1cm}
\subfloat[][$\bE = \HH^0_{\ast}(E_{2,3}^{(2)})$, with $E_{2,3}^{(2)}$ over $\PP^2$.]
{
\begin{tikzpicture}[scale=0.7,>=latex]
\node (t0) at (1.5,0) [inner sep=1pt] {\tiny \parbox{1.3cm}{\centering $\bE_{\ge 2}$ $(30{\times} 42,27)$}};

\node (m0) at (4.25,0) [inner sep=1pt] {\tiny $(29{\times} 39,26)$};
\draw [->,very thin] (t0) -- (m0);

\node (m01) at (7,-0.) [inner sep=1pt] {\tiny $(26 {\times} 31,23)$};
\draw [->,very thin] (m0) -- (m01);
\node (m02) at (7,-.5) [inner sep=1pt] {\tiny $\mathbf{(23 {\times} 23,20)}$};
\draw [->,very thin] (m0) -- (5,-.5) -- (m02);
\node (m03) at (7,-1) [inner sep=1pt] {\tiny $\mathbf{(23 {\times} 24,20)}$};
\draw [->,very thin] (m0) -- (5,-1) -- (m03);

\node (m1) at (4.25,-1.5) [inner sep=1pt] {\tiny $(28{\times} 36,25)$};
\draw [->,very thin] (t0) -- (2.385,-1.5) -- (m1);

\node (m11) at (7,-1.5) [inner sep=1pt] {\tiny $(25 {\times} 28,22)$};
\draw [->,very thin] (m1) -- (m11);
\node (m12) at (7,-2) [inner sep=1pt] {\tiny $\mathbf{(22 {\times} 21,19)}$};
\draw [->,very thin] (m1) -- (5,-2) -- (m12);

\node (m2) at (4.25,-2.5) [inner sep=1pt] {\tiny $(27{\times} 33,24)$};
\draw [->,very thin] (t0) -- (2.355,-2.5) -- (m2);
\node (m21) at (7,-2.5) [inner sep=1pt] {\tiny $\mathbf{(24 {\times} 25,21)}$};
\draw [->,very thin] (m2) -- (m21);

\node (m3) at (4.25,-3) [inner sep=1pt] {\tiny $(26{\times} 30,23)$};
\draw [->,very thin] (t0) -- (2.34,-3) -- (m3);
\node (m31) at (7,-3) [inner sep=1pt] {\tiny $\mathbf{(23 {\times} 22,20)}$};
\draw [->,very thin] (m3) -- (m31);

\node (m4) at (4.25,-3.5) [inner sep=1pt] {\tiny $(25{\times} 27,22)$};
\draw [->,very thin] (t0) -- (2.325,-3.5) -- (m4);

\node (m5) at (4.25,-4) [inner sep=1pt] {\tiny $\mathbf{(24{\times} 24,21)}$};
\draw [->,very thin] (t0) -- (2.31,-4) -- (m5);

\node (m6) at (4.25,-4.5) [inner sep=1pt] {\tiny $(27{\times} 34,24)$};
\draw [->,very thin] (t0) -- (2.295,-4.5) -- (m6);

\node (m7) at (4.25,-5) [inner sep=1pt] {\tiny $(24{\times} 26,21)$};
\draw [->,very thin] (t0) -- (2.28,-5) -- (m7);

\node (m8) at (4.25,-5.5) [inner sep=1pt] {\tiny $(24{\times} 27,21)$};
\draw [->,very thin] (t0) -- (2.265,-5.5) -- (m8);

\node (m9) at (4.25,-6) [inner sep=1pt] {\tiny $\mathbf{(20{\times} 18,17)}$};
\draw [->,very thin] (t0) -- (2.25,-6) -- (m9);

\node at (2,-6.25) [] {};
\end{tikzpicture}
}

\medskip

\subfloat[][$\bE = \HH^0_{\ast}(E_{1,3}^{(1)})$, with $E_{1,3}^{(1)}$ over $\PP^3$.]
{
\begin{tikzpicture}[scale=0.7,>=latex]
\node (t0) at (1.5,0) [inner sep=1pt] {\tiny \parbox{1.3cm}{\centering $\bE_{\ge 1}$ \\ $(16{\times} 25,13)$}};

\node (m0) at (4.25,0) [inner sep=1pt] {\tiny $(15{\times} 21,12)$};
\draw [->,very thin] (t0) -- (m0);
\node (m2) at (4.25,-0.5) [inner sep=1pt] {\tiny $(14 {\times} 17,11)$};
\draw [->,very thin] (t0) -- (2.25,-0.5) -- (m2);
\node (m3) at (4.25,-1.) [inner sep=1pt] {\tiny $\mathbf{(13 {\times} 13,10)}$};
\draw [->,very thin] (t0) -- (2.25,-1.) -- (m3);

\node at (2,-2.25) [] {};
\end{tikzpicture}
}
\hspace{1cm}
\subfloat[][$\bE = \HH^0_{\ast}(E_{2,3}^{(0)})$, with $E_{2,3}^{(0)}$ over $\PP^3$.]
{
\begin{tikzpicture}[scale=0.7,>=latex]
\node (t0) at (1.5,0) [inner sep=1pt] {\tiny \parbox{1.3cm}{\centering $\bE_{\ge 1}$\\ $(18{\times} 30,15)$}};

\node (m0) at (4.25,0) [inner sep=1pt] {\tiny $(17{\times} 26,14)$};
\draw [->,very thin] (t0) -- (m0);
\node (m1) at (4.25,-0.5) [inner sep=1pt] {\tiny $(16 {\times} 22,13)$};
\draw [->,very thin] (t0) -- (2.25,-0.5) -- (m1);
\node (m2) at (4.25,-1.) [inner sep=1pt] {\tiny $(15 {\times} 18,12)$};
\draw [->,very thin] (t0) -- (2.25,-1.) -- (m2);
\node (m3) at (4.25,-1.5) [inner sep=1pt] {\tiny $\mathbf{(14 {\times} 14,11)}$};
\draw [->,very thin] (t0) -- (2.25,-1.5) -- (m3);
\node (m4) at (4.25,-2.) [inner sep=1pt] {\tiny $\mathbf{(14 {\times} 15,11)}$};
\draw [->,very thin] (t0) -- (2.25,-2.) -- (m4);

\node at (2,-2.25) [] {};
\end{tikzpicture}
}
\hspace{1cm}
\subfloat[][$\bE = \HH^0_{\ast}(E_{1,4}^{(0)})$, with $E_{1,4}^{(0)}$ over $\PP^3$.]
{
\begin{tikzpicture}[scale=0.7,>=latex]
\node (t0) at (1.5,0) [inner sep=1pt] {\tiny \parbox{1.3cm}{\centering $\bE_{\ge 1}$\\ $(19{\times} 30,15)$}};

\node (m0) at (4.25,0) [inner sep=1pt] {\tiny $(18{\times} 26,14)$};
\draw [->,very thin] (t0) -- (m0);
\node (m1) at (4.25,-0.5) [inner sep=1pt] {\tiny $(17 {\times} 22,13)$};
\draw [->,very thin] (t0) -- (2.25,-0.5) -- (m1);
\node (m2) at (4.25,-1.) [inner sep=1pt] {\tiny $(16 {\times} 18,12)$};
\draw [->,very thin] (t0) -- (2.25,-1.) -- (m2);
\node (m3) at (4.25,-1.5) [inner sep=1pt] {\tiny $\mathbf{(15 {\times} 14,11)}$};
\draw [->,very thin] (t0) -- (2.25,-1.5) -- (m3);
\node (m4) at (4.25,-2.) [inner sep=1pt] {\tiny $\mathbf{(15 {\times} 15,11)}$};
\draw [->,very thin] (t0) -- (2.25,-2.) -- (m4);

\node at (2,-2.25) [] {};
\end{tikzpicture}
}
\caption{\label{fig:Steiner} Examples of possible constant rank matrices arising from generalized Steiner bundles. The triple $(a{\times} b,\rho)$ indicates an $a \times b$ matrix of constant rank $\rho$.}
\end{figure}

\subsection{Linear monads and instanton bundles}\label{instantons}

Mathematical instanton bundles were first introduced in \cite{OS} as rank $2m$ bundles on $\PP^{2m+1}$ satisfying certain cohomological conditions. 
They generalize particular rank 2 bundles on $\PP^3$ whose study was motivated by problems from physics, see \cite{adhm}. They can also be defined 
as cohomology of a linear monad. Here we consider a definition in the spirit of \cite{jardim_instantons}. 
Define a \emph{monad} (on $\PP^n$) as a three-term complex of
vector bundles $A$ $B$, $C$ on $\PP^n$:
$$A \xrightarrow{f} B \xrightarrow{g} C, \qquad \mbox{with $g \circ f=0$},$$
where $g$ surjective and such that $\im f$ is a subbundle of $\ker
g$, i.e.~$f : A \to \ker g$ is fiberwise injective. Its cohomology is the vector bundle $E=\ker g/\im f$.

\begin{defn}
	An \emph{instanton bundle} is a rank $r$ vector bundle $E_{(r,k)}$ on $\PP^n$, $r \ge n-1$, which is the cohomology of a linear monad of type:
	\begin{equation}\label{monade_ist}
	\OO_{\PP^n}(-1)^k \xrightarrow{f} 	\OO_{\PP^n}^{2k+r} \xrightarrow{g} 	\OO_{\PP^n}(1)^k.
\end{equation}
	In this case, $k=\dim \HH^1(E_{(r,k)}(-1))$ and is called the \emph{charge} of $E_{(r,k)}$. $E_{(r,k)}$ is sometimes called a \emph{$k$-instanton}.
\end{defn}

According to \cite{floystad_monads}, the condition $r \ge n-1$ is equivalent to the existence of a monad of type \eqref{monade_ist}. 
It should be noted however that, in this range,
the degeneracy locus of the map $f : A \to \ker g$ has \emph{expected} codimension
$r+1$. So one has to chose $f$ and $g$ carefully, as for randomly taken $f$ and $g$ in this range with $g \circ
f=0$, it may happen that $\ker g / \im f$ is not locally free.

Figure \ref{fig:Instanton} contains examples of sizes and ranks of matrices that can arise from instantons.
\begin{figure}[!ht]
\begin{multicols}{2}
\subfloat[][$\bE = \HH^0_{\ast}(E_{(2,2)})$, where $E_{(2,2)}$ is a classical (rank 2) instanton bundle over $\PP^3$ of charge 2.]
{\label{fig:ist22}
\begin{tikzpicture}[scale=0.7,>=latex]
\node (t0) at (1.5,0.5) [inner sep=1pt] {\tiny \parbox{1.4cm}{\centering $\bE_{\ge 2}$\\ $(12{\times} 18,10)$}};

\node (m0) at (4.25,0.5) [inner sep=1pt] {\tiny $(11{\times} 14,9)$};
\draw [->,very thin] (t0) -- (m0);
\node (m1) at (4.25,-0) [inner sep=1pt] {\tiny $\mathbf{(10 {\times} 10,8)}$};
\draw [->,very thin] (t0) -- (2.25,-0) -- (m1);

\node (t1) at (1.5,-1) [inner sep=1pt] {\tiny \parbox{1.4cm}{\centering $\bE_{\ge 2}$\\ $(30{\times} 62,28)$}};

\node (m3) at (4.25,-1) [inner sep=1pt] {\tiny $(29 {\times} 58,27)$};
\draw [->,very thin] (t1) -- (m3);
\node (m31) at (7,-1) [inner sep=1pt] {\tiny $(25 {\times} 43,23)$};
\draw [->,very thin] (m3) -- (m31);
\node (m32) at (7,-1.5) [inner sep=1pt] {\tiny $(21 {\times} 28,19)$};
\draw [->,very thin] (m3) -- (5,-1.5) -- (m32);
\node (m33) at (7,-2) [inner sep=1pt] {\tiny $(19 {\times} 22,17)$};
\draw [->,very thin] (m3) -- (5,-2)-- (m33);

\node (m4) at (4.25,-2.5) [inner sep=1pt] {\tiny $(28 {\times} 54,26)$};
\draw [->,very thin] (t1) -- (2.435,-2.5) -- (m4);
\node (m41) at (7,-2.5) [inner sep=1pt] {\tiny $(24 {\times} 39,22)$};
\draw [->,very thin] (m4) -- (m41);
\node (m42) at (7,-3) [inner sep=1pt] {\tiny $(20 {\times} 24,18)$};
\draw [->,very thin] (m4) -- (5,-3) -- (m42);

\node (m5) at (4.25,-3.5) [inner sep=1pt] {\tiny $(27 {\times} 50,25)$};
\draw [->,very thin] (t1) -- (2.405,-3.5) -- (m5);
\node (m51) at (7,-3.5) [inner sep=1pt] {\tiny $(23 {\times} 35,21)$};
\draw [->,very thin] (m5) -- (m51);
\node (m52) at (7,-4) [inner sep=1pt] {\tiny $\mathbf{(19 {\times} 20,17)}$};
\draw [->,very thin] (m5) -- (5,-4) -- (m52);

\node (m6) at (4.25,-4.5) [inner sep=1pt] {\tiny $(26 {\times} 46,24)$};
\draw [->,very thin] (t1) -- (2.385,-4.5) -- (m6);
\node (m61) at (7,-4.5) [inner sep=1pt] {\tiny $(22 {\times} 31,20)$};
\draw [->,very thin] (m6) -- (m61);

\node (m7) at (4.25,-5) [inner sep=1pt] {\tiny $(25 {\times} 42,23)$};
\draw [->,very thin] (t1) -- (2.37,-5) -- (m7);
\node (m71) at (7,-5) [inner sep=1pt] {\tiny $(21 {\times} 27,19)$};
\draw [->,very thin] (m7) -- (m71);

\node (m8) at (4.25,-5.5) [inner sep=1pt] {\tiny $(24 {\times} 38,22)$};
\draw [->,very thin] (t1) -- (2.355,-5.5) -- (m8);
\node (m81) at (7,-5.5) [inner sep=1pt] {\tiny $(20 {\times} 23,18)$};
\draw [->,very thin] (m8) -- (m81);

\node (m9) at (4.25,-6) [inner sep=1pt] {\tiny $(23 {\times} 34,21)$};
\draw [->,very thin] (t1) -- (2.34,-6) -- (m9);
\node (m91) at (7,-6) [inner sep=1pt] {\tiny $(20 {\times} 24,18)$};
\draw [->,very thin] (m9) -- (m91);

\node (m10) at (4.25,-6.5) [inner sep=1pt] {\tiny $(22 {\times} 30,20)$};
\draw [->,very thin] (t1) -- (2.325,-6.5) -- (m10);
\node (m101) at (7,-6.5) [inner sep=1pt] {\tiny $\mathbf{(19 {\times} 20,17)}$};
\draw [->,very thin] (m10) -- (m101);

\node (m-11) at (4.25,-7) [inner sep=1pt] {\tiny $(21 {\times} 26,19)$};
\draw [->,very thin] (t1) -- (2.31,-7) -- (m-11);

\node (m-12) at (4.25,-7.5) [inner sep=1pt] {\tiny $(20 {\times} 22,18)$};
\draw [->,very thin] (t1) -- (2.295,-7.5) -- (m-12);

\node (m-13) at (4.25,-8) [inner sep=1pt] {\tiny $(26 {\times} 47,24)$};
\draw [->,very thin] (t1) -- (2.28,-8) -- (m-13);

\node (m-14) at (4.25,-8.5) [inner sep=1pt] {\tiny $(22 {\times} 32,20)$};
\draw [->,very thin] (t1) -- (2.265,-8.5) -- (m-14);

\node (m-15) at (4.25,-9) [inner sep=1pt] {\tiny $(20 {\times} 26,18)$};
\draw [->,very thin] (t1) -- (2.25,-9) -- (m-15);

\end{tikzpicture}
}

\columnbreak

\subfloat[][$\bE = \HH^0_{\ast}(E_{(2,4)})$, where $E_{(2,4)}$ is a classical (rank 2) instanton bundle over 
$\PP^3$ of charge 4.]
{\label{fig:ist4}
\begin{tikzpicture}[scale=0.7,>=latex]
\node (t0) at (1.5,0) [inner sep=1pt] {\tiny \parbox{1.4cm}{\centering $\bE_{\ge 3}$\\ $(20{\times} 34,18)$}};

\node (m0) at (4.25,0) [inner sep=1pt] {\tiny $(19{\times} 30,17)$};
\draw [->,very thin] (t0) -- (m0);

\node (m1) at (4.25,-0.5) [inner sep=1pt] {\tiny $(18 {\times} 26,16)$};
\draw [->,very thin] (t0) -- (2.25,-0.5) -- (m1);
\node (m11) at (7,-0.5) [inner sep=1pt] {\tiny $\mathbf{(15 {\times} 16,13)}$};
\draw [->,very thin] (m1) -- (m11);

\node (m2) at (4.25,-1.) [inner sep=1pt] {\tiny $(17 {\times} 22,15)$};
\draw [->,very thin] (t0) -- (2.25,-1.) -- (m2);

\node (m3) at (4.25,-1.5) [inner sep=1pt] {\tiny $(16 {\times} 18,14)$};
\draw [->,very thin] (t0) -- (2.25,-1.5) -- (m3);

\node (m4) at (4.25,-2.) [inner sep=1pt] {\tiny $(16 {\times} 19,14)$};
\draw [->,very thin] (t0) -- (2.25,-2.) -- (m4);

\end{tikzpicture}
}

%
%
%
%
%
%
%
%

\vspace*{\stretch{2}}

\subfloat[][$\bE = \HH^0_{\ast}(E_{(3,2)})$, where $E_{(3,2)}$ is a rank 3 instanton bundle over $\PP^3$.]
{\label{ultima}
\begin{tikzpicture}[scale=0.7,>=latex]
\node (t0) at (1.5,0) [inner sep=1pt] {\tiny \parbox{1.4cm}{\centering $\bE_{\ge 2}$\\ $(22{\times} 38,19)$}};

\node (m0) at (4.25,-0) [inner sep=1pt] {\tiny $(21 {\times} 34,18)$};
\draw [->,very thin] (t0) -- (m0);
\node (m01) at (7,-0) [inner sep=1pt] {\tiny $(17 {\times} 19,14)$};
\draw [->,very thin] (m0) -- (m01);

\node (m1) at (4.25,-0.5) [inner sep=1pt] {\tiny $(20 {\times} 30,17)$};
\draw [->,very thin] (t0) -- (2.25,-0.5) -- (m1);

\node (m2) at (4.25,-1) [inner sep=1pt] {\tiny $(19 {\times} 26,16)$};
\draw [->,very thin] (t0)-- (2.25,-1) -- (m2);
\node (m21) at (7,-1) [inner sep=1pt] {\tiny $\mathbf{(16 {\times} 16,13)}$};
\draw [->,very thin] (m2) -- (m21);

\node (m3) at (4.25,-1.5) [inner sep=1pt] {\tiny $(18 {\times} 22,15)$};
\draw [->,very thin] (t0)-- (2.25,-1.5) -- (m3);

\node (m4) at (4.25,-2) [inner sep=1pt] {\tiny $\mathbf{(17 {\times} 18,14)}$};
\draw [->,very thin] (t0)-- (2.25,-2) -- (m4);

\node (m5) at (4.25,-2.5) [inner sep=1pt] {\tiny $(18 {\times} 23,15)$};
\draw [->,very thin] (t0)-- (2.25,-2.5) -- (m5);

\end{tikzpicture}
}

\vspace*{\stretch{1}}
%
%
%
%
%
%
%
%
%

\end{multicols}
\caption{\label{fig:Instanton} Examples of possible constant rank matrices that can be obtained from instanton bundles. The notation is the same as in Figure \ref{fig:Steiner}.}
\end{figure}
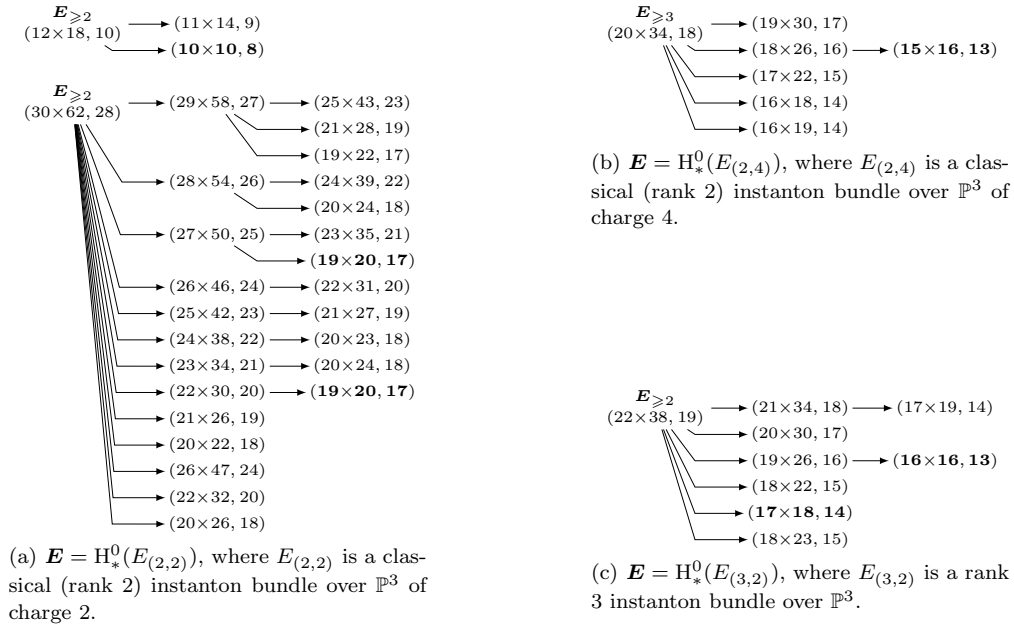

\subsection{Null-correlation, Tango, and the Horrocks-Mumford bundle} \label{varie}

Null-correlation bundles are examples of rank $n-1$ bundles on $\PP^n$ for $n$ odd: they are constructed as kernel of the bundle epimorphism $\T_{\PP^n}(-1) \ra \OO_{\PP^n}(1)$. 
A construction due to Ein \cite{Ein_null_correlation} generalizes this definition on $\PP^3$:

\begin{defn}\cite{Ein_null_correlation}
A rank $2$ vector bundle $E_{(e,d,c)}$ on $\PP^3$ is said to be a \emph{generalized null-correlation bundle} if it is given as the cohomology of a monad of the form:
\[
	\OO_{\PP^3}(-c) \ra \OO_{\PP^3}(d) \oplus \OO_{\PP^3}(e) \oplus \OO_{\PP^3}(-e) \oplus \OO_{\PP^3}(-d) \ra \OO_{\PP^3}(c),
\]
where $c > d \ge e \ge 0$ are given integers. This way, $E_{0,0,1}$ is
a classical null-correlation bundle on $\PP^3$.
\end{defn}

Figure \ref{fig:NC} shows possible sizes of matrices appearing from null-correlation bundles.
\input{figure_NC.tex}

\vskip.1in

A construction of Tango \cite{Tango} produces an indecomposable rank $n-1$ bundle over $\PP^n$, for all $\PP^n$, defined as 
a quotient $E_n'$ of the dual of the kernel of the evaluation map of $\Omega_{\PP^n}^1(2)$, which is a globally generated bundle. 
More in detail, one starts by constructing the rank $\binom{n}{2}$ bundle $E_n$ from the exact sequence:
\begin{equation}
0 \rightarrow \T_{\PP^n}(-2) \rightarrow \OO_{\PP^n}^{\binom{n+1}{2}} \rightarrow E_n \rightarrow 0,
\end{equation}
and then takes its quotient $E_n'$:
\begin{equation}
0 \rightarrow \OO_{\PP^n}^{\binom{n}{2}-n} \rightarrow E_n \rightarrow E_n' \rightarrow 0,
\end{equation}
that turns out to be a rank $n$ indecomposable vector bundle on $\PP^n$ containing a trivial subbundle of rank $1$. A Tango bundle $F_n$ is defined as the 
quotient of $E'_n$ by its trivial subbundle, and thus has rank $n-1$. 

\vskip.05in

Indecomposable rank $n-2$ bundles on $\PP^n$ are even more difficult to construct; on $\PP^4$ there is essentially 
only one example known, whose construction is due to Horrocks and Mumford \cite{HM}. It is an indecomposable rank $2$ bundle that 
can be defined as the cohomology of the monad:
\[
	\OO_{\PP^4}(-1)^5 \ra (\Omega^2_{\PP^4}(2))^{2} \ra \OO_{\PP^4}^5.
\]

Figures \ref{fig:tangoHM}\subref{fig:TangoP4} and \ref{fig:tangoHM}\subref{fig:HM} show possible examples of
constant rank matrices that can be constructed starting from a Tango and the Horrocks-Mumford bundle respectively.

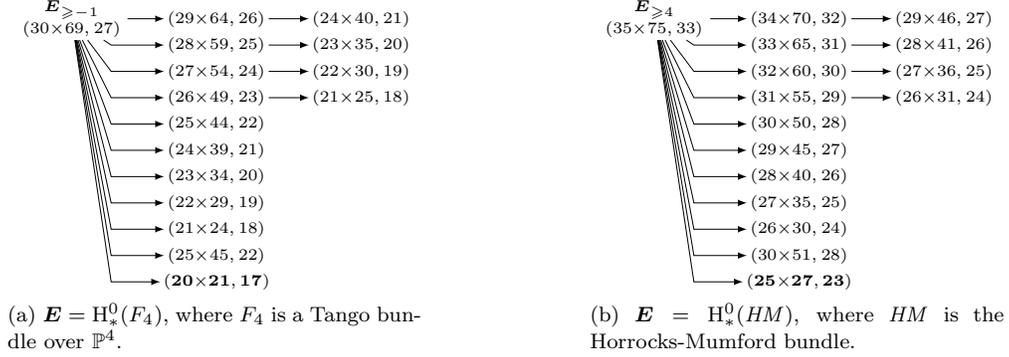
\begin{figure}[!ht]
\begin{multicols}{2}

\subfloat[][$\bE = \HH^0_{\ast}(F_4)$, where $F_4$ is a Tango bundle over $\PP^4$.]
{\label{fig:TangoP4}
\begin{tikzpicture}[scale=0.7,>=latex]

\node (t1) at (1.5,0) [inner sep=1pt] {\tiny \parbox{1.4cm}{\centering $\bE_{\ge -1}$\\ $(30{\times} 69,27)$}};

\node (m0) at (4.25,0) [inner sep=1pt] {\tiny $(29 {\times} 64,26)$};
\draw [->,very thin] (t1) -- (m0);
\node (m01) at (7,0) [inner sep=1pt] {\tiny $(24 {\times} 40,21)$};
\draw [->,very thin] (m0) -- (m01);

\node (m1) at (4.25,-0.5) [inner sep=1pt] {\tiny $(28 {\times} 59,25)$};
\draw [->,very thin] (t1) -- (2.25,-0.5) -- (m1);
\node (m11) at (7,-0.5) [inner sep=1pt] {\tiny $(23 {\times} 35,20)$};
\draw [->,very thin] (m1) -- (m11);

\node (m2) at (4.25,-1) [inner sep=1pt] {\tiny $(27 {\times} 54,24)$};
\draw [->,very thin] (t1) -- (2.25,-1) -- (m2);
\node (m21) at (7,-1) [inner sep=1pt] {\tiny $(22 {\times} 30,19)$};
\draw [->,very thin] (m2) -- (m21);

\node (m3) at (4.25,-1.5) [inner sep=1pt] {\tiny $(26 {\times} 49,23)$};
\draw [->,very thin] (t1) -- (2.25,-1.5) -- (m3);
\node (m31) at (7,-1.5) [inner sep=1pt] {\tiny $(21 {\times} 25,18)$};
\draw [->,very thin] (m3) -- (m31);

\node (m4) at (4.25,-2) [inner sep=1pt] {\tiny $(25 {\times} 44,22)$};
\draw [->,very thin] (t1) -- (2.25,-2) -- (m4);

\node (m5) at (4.25,-2.5) [inner sep=1pt] {\tiny $(24 {\times} 39,21)$};
\draw [->,very thin] (t1) -- (2.25,-2.5) -- (m5);

\node (m6) at (4.25,-3) [inner sep=1pt] {\tiny $(23 {\times} 34,20)$};
\draw [->,very thin] (t1) -- (2.25,-3) -- (m6);

\node (m7) at (4.25,-3.5) [inner sep=1pt] {\tiny $(22 {\times} 29,19)$};
\draw [->,very thin] (t1) -- (2.25,-3.5) -- (m7);

\node (m8) at (4.25,-4) [inner sep=1pt] {\tiny $(21 {\times} 24,18)$};
\draw [->,very thin] (t1) -- (2.25,-4) -- (m8);

\node (m9) at (4.25,-4.5) [inner sep=1pt] {\tiny $(25 {\times} 45,22)$};
\draw [->,very thin] (t1) -- (2.25,-4.5) -- (m9);

\node (m-10) at (4.25,-5) [inner sep=1pt] {\tiny $\mathbf{(20 {\times} 21,17)}$};
\draw [->,very thin] (t1) -- (2.25,-5) -- (m-10);

\end{tikzpicture}
}

\columnbreak

\subfloat[][$\bE = \HH^0_{\ast}(\mathit{HM})$, where $\mathit{HM}$ is the Horrocks-Mumford bundle.]
{\label{fig:HM}
\begin{tikzpicture}[scale=0.7,>=latex]
\node (t1) at (1.5,0) [inner sep=1pt] {\tiny \parbox{1.4cm}{\centering $\bE_{\ge 4}$\\ $(35{\times} 75,33)$}};

\node (m0) at (4.25,0) [inner sep=1pt] {\tiny $(34 {\times} 70,32)$};
\draw [->,very thin] (t1) -- (m0);
\node (m01) at (7,0) [inner sep=1pt] {\tiny $(29 {\times} 46,27)$};
\draw [->,very thin] (m0) -- (m01);

\node (m1) at (4.25,-0.5) [inner sep=1pt] {\tiny $(33 {\times} 65,31)$};
\draw [->,very thin] (t1) -- (2.25,-0.5) -- (m1);
\node (m11) at (7,-0.5) [inner sep=1pt] {\tiny $(28 {\times} 41,26)$};
\draw [->,very thin] (m1) -- (m11);

\node (m2) at (4.25,-1) [inner sep=1pt] {\tiny $(32 {\times} 60,30)$};
\draw [->,very thin] (t1) -- (2.25,-1) -- (m2);
\node (m21) at (7,-1) [inner sep=1pt] {\tiny $(27 {\times} 36,25)$};
\draw [->,very thin] (m2) -- (m21);

\node (m3) at (4.25,-1.5) [inner sep=1pt] {\tiny $(31 {\times} 55,29)$};
\draw [->,very thin] (t1) -- (2.25,-1.5) -- (m3);
\node (m31) at (7,-1.5) [inner sep=1pt] {\tiny $(26 {\times} 31,24)$};
\draw [->,very thin] (m3) -- (m31);

\node (m4) at (4.25,-2) [inner sep=1pt] {\tiny $(30 {\times} 50,28)$};
\draw [->,very thin] (t1) -- (2.25,-2) -- (m4);

\node (m5) at (4.25,-2.5) [inner sep=1pt] {\tiny $(29 {\times} 45,27)$};
\draw [->,very thin] (t1) -- (2.25,-2.5) -- (m5);

\node (m6) at (4.25,-3) [inner sep=1pt] {\tiny $(28 {\times} 40,26)$};
\draw [->,very thin] (t1) -- (2.25,-3) -- (m6);

\node (m7) at (4.25,-3.5) [inner sep=1pt] {\tiny $(27 {\times} 35,25)$};
\draw [->,very thin] (t1) -- (2.25,-3.5) -- (m7);

\node (m8) at (4.25,-4) [inner sep=1pt] {\tiny $(26 {\times} 30,24)$};
\draw [->,very thin] (t1) -- (2.25,-4) -- (m8);

\node (m9) at (4.25,-4.5) [inner sep=1pt] {\tiny $(30 {\times} 51,28)$};
\draw [->,very thin] (t1) -- (2.25,-4.5) -- (m9);

\node (m-10) at (4.25,-5) [inner sep=1pt] {\tiny $\mathbf{(25 {\times} 27,23)}$};
\draw [->,very thin] (t1) -- (2.25,-5) -- (m-10);

\end{tikzpicture}
}
\end{multicols}
\caption{\label{fig:tangoHM} Examples of size and rank of matrices that can be constructed from Tango and Horrocks-Mumford bundles. The notation is the same as in Figure \ref{fig:Steiner}.}
\end{figure}

\section{Skew-symmetric matrices}\label{sec4}

As suggested in the introduction, the idea of using vector bundles to study and then explicitly construct linear matrices of constant rank dates back to the $80$'s. 
Indeed an $n+1$--dimensional linear space of $a \times b$ matrices of constant rank $\rho$ gives rise to an exact sequence:
\begin{equation} \label{se E}
  0 \longrightarrow K \longrightarrow \OO_{\PP^n}(-1)^b \stackrel{A}{\longrightarrow} \OO_{\PP^n}^a \longrightarrow E\longrightarrow 0.
\end{equation}


The kernel and cokernel sheaves $K$ and $E$ of such a matrix are tightly related to one another, and the matrix itself is expressed by an extension class
$\Ext^2(E,K)$. In this section we consider linear matrices of constant rank with extra symmetry properties and connect this with our previous results.

\begin{defn}
  Let $E$ and $K$ be vector bundles on $\PP^n$. 
  For $t \in \Z$, consider the Yoneda map:
  \[
  \upsilon_t : \HH^0(E(t)) \otimes \Ext^2(E,K) \to \HH^2(K(t)).
  \]
  Set $\bE=\HH^0_*(E)$
  and $\bM=\HH^2_*(K)$. Define $\Phi$ as the
  linear map induced by the $\upsilon_t$:
  \[
  \Phi: \Ext^2(E,K) \longrightarrow \Hom_R(\bE,\bM)_0.
  \]
  \end{defn}

\begin{thm} \label{teorema C}
  Assume $n \ge 3$ and let $A : R(-m-1)^b \to R(-m)^a$ be
  skew-symmetrizable of constant rank. Set $K = \ker A$ and $E=\coker A$. Then $K \simeq
  E^*(-2m-1)$, and there is an element
  $\eta$ lying in $\HH^2(S^2E^*(-2m-1))$ under the canonical decomposition
  \[
  \Ext^2(E,E^*(-2m-1)) \simeq \HH^2(S^2 E^*(-2m-1)) \oplus
  \HH^2(\wedge^2 E^*(-2m-1)),
  \] 
  such that $A$ presents $\ker \Phi(\eta)$.
  Conversely, if $\eta \in \HH^2(S^2E^*(-2m-1))$, $\mu
  = \Phi(\eta) : \bE \to \bM$ satisfies the assumptions of Theorem
  \ref{teorema B}, and $\ker A \simeq E^*(-2m-1)$, then $A$ is skew-symmetrizable.
  
  The same holds for a symmetrizable $A$, once the above condition on $\eta$ is replaced with $\eta \in \HH^2(\wedge^2 E^*(-2m-1))$.
\end{thm}

\begin{proof}
Let us check the first statement. Assume thus that
 $A$ is skew-symmetric. Then, sheafifying the matrix $A$
provided by Theorem \ref{teorema B} we get a long exact sequence of type \eqref{se E}, where we have already noticed that, since $A$ has constant rank, $E$ and $K$ are locally free. Hence:
\[\mathcal{E}xt^i(E,\OO_{\PP^n}) = \mathcal{E}xt^i(K,\OO_{\PP^n}) = 0,\]
for all $i > 0$.
Therefore, dualizing the above sequence and twisting by $\OO_{\PP^n}(-2m-1)$we get:
\[
  0 \longrightarrow E^*(-2m-1) \longrightarrow \OO_{\PP^n}(-m-1)^a \stackrel{-A}{\longrightarrow} \OO_{\PP^n}(-m)^a \longrightarrow K^*(-2m-1)\longrightarrow 0
\]

Since the image $\EE$ of $A$ is the same as the image of
$-A$, these exact sequences can be put together to get $K \simeq
E^*(-2m-1)$. We may thus rewrite them as:
\[
  0 \longrightarrow E^*(-2m-1) \longrightarrow \OO_{\PP^n}(-m-1)^a \stackrel{A}{\longrightarrow} \OO_{\PP^n}(-m)^a \longrightarrow E\longrightarrow 0.
\]
This long exact sequence represents an element $\eta \in
\Ext^2(E,E^*(-2m-1)) \simeq \HH^2(E^* \otimes E^*(-2m-1))$. By looking at the construction of \cite[Lemma 3.1]{bo_fa_me},
it is now clear that for $A$ to be skew-symmetric $\eta$ should lie in $\HH^2(S^2E^*(-2m-1))$.

To understand why $A$  presents $\ker \Phi(\eta)$, let us first
expand some details of the definition of $\Phi$. Let 
again $\EE$ be the image of $A$ and write for any integer $t$ the
exact commutative diagram:
\[
  \begin{tikzpicture}
  \node (z) at (-0.25,0) [] {$0$};
  \node (K) at (1,0) [] {$K(t)$};
  \draw [->] (z) -- (K);
  
  \node (O1) at (3.75,0) [] {$\OO_{\PP^n}(-m-1+t)^a$};
  \draw [->] (K) -- (O1);
  
  \node (O2) at (8,0) [] {$\OO_{\PP^n}(-m+t)^a$};
  \draw [->] (O1) --node[above]{\scriptsize $A$} (O2);
  
  \node (E) at (10.5,0) [] {$E(t)$};
  \draw [->] (O2) -- (E);
 
 \node (y) at (11.75,0) [] {$0.$};
  \draw [->] (E) -- (y);
 
 \node (C) at (6,-0.75) [] {$\EE(t)$};
  \draw [->] (O1) -- (C);
  \draw [->] (C) -- (O2);
 
 \node (a) at (4.5,-1.25) [] {$0$};
 \node (b) at (7.5,-1.25) [] {$0$};
  \draw [->] (a) -- (C);
  \draw [->] (C) -- (b);
 
  \end{tikzpicture}
\]

Taking cohomology, we get maps:
\begin{equation}\label{mappe mu_t}
\begin{tikzpicture}[level/.style={sibling distance=50mm/#1},baseline]
\node (a) at (0,0) [] {$\mu_t \colon \HH^0(E(t))$};
\node (b) at (5,0) [] {$\HH^2(K(t)).$};
\node (c) at (2.7,-0.9) [inner sep=2pt] {$\HH^1(\EE(t))$};
\draw [->] (a) -- (b);
\draw [->] (a) -- (c);
\draw [->] (c) -- (b);
\end{tikzpicture}
\end{equation}

Remark that sequence \eqref{se E} corresponds to $\eta \in
\Ext^2(E,K)$. Cup product with $\eta$  induces via Yoneda's composition the
linear maps $\mu_t$'s of \eqref{mappe mu_t}. But these maps are obtained from the $\upsilon_t$
by transposition, so cup product with $\eta$ gives:
\[\mu = \oplus_t \mu_t= \Phi(\eta) : \HH^0_*(E) = \bE \to \bM = \HH^2_*(E^*(-2m-1)).\]

This is obviously a morphism, homogeneous of degree $0$.
Notice that as soon as 
$n \ge 3$, both groups $\HH^1(\OO_{\PP^n}(-m+t))$ and
$\HH^2(\OO_{\PP^n}(-m-1+t))$ vanish for all values of $t$, hence $\mu$
is surjective. By construction $A$ appears as presentation
matrix of $\bF = \ker \mu$.

For the converse statement, the element $\eta$ corresponds to a
length-$2$ extension of $E^*(-2m-1)$ by $E$. Set $\mu=\Phi(\eta)$.
Theorem \ref{teorema B} gives a linear matrix
$A$ of constant rank presenting $\bF=\ker \mu$. 
Note that sheafifying $\bF$ we get back the
bundle $E$, as $\bM=\HH^2_*(E^*(-2m-1))$ is Artinian.
On the other
hand, since $\ker A \simeq E^*(-2m-1)$, the matrix $A$ represents the
extension class $\eta$.  Therefore, $A$ is
skew-symmetrizable by \cite[Lemma 3.5 (iii)]{bo_fa_me}. Part (i) of
the same lemma says that,
when dealing with symmetric matrices, one should replace the condition
$\eta \in \HH^2(S^2 E^*(-2m-1))$ with $\eta \in \HH^2(\wedge^2 E^*(-2m-1))$.
The theorem is thus proved.
\end{proof}

There are particularly favorable situations, for instance when $E$ is an
instanton bundle of charge $2$  or a general (in the sense of \cite{HH}) instanton bundle of charge $4$. Both these types of bundles 
have natural cohomology, see \cite{Hart_ist, HH}, and their resolution
is known from \cite{raha}.
Using this information, one can check that the map $\Phi$ is a surjection, see \cite[Theorem 5.2 and Theorem 6.1]{bo_fa_me}. 
This makes the search for an element $\eta$
corresponding to a skew-symmetric matrix of the prescribed size and constant rank considerably easier.

\begin{ex} Let us work out the case of general instantons of rank $2$, and write down an explicit $10 \times 10$ skew-symmetric matrix of
  constant rank $8$. This complements (and corrects a typo in) \cite[\S 5.2]{bo_fa_me}. Let $E$ be a rank $2$ instanton bundle of charge
  $2$ on $\PP^3$, obtained as cohomology of a monad of type
  \eqref{monade_ist}. It turns out that a
  general section of $E(1)$ vanishes along the union $Z$ of three skew lines, which are
  contained in a (unique) smooth quadric $Q$. Under the isomorphism $Q
  \simeq \PP^1 \times \PP^1$, we may write $Z$ as a divisor of
  bidegree $(3,0)$ of $Q$. This gives rise to the exact sequence:
  \[
  0 \to \OO_{\PP^3}(-1)^2 \to E \to \OO_Q(-2,1) \to 0.
  \]
  Now let $\bE:=\HH^0_*(E)$ be the module of global sections of
  $E$. It is easy to compute the resolution of the module of global
  sections of $\OO_Q(-2,1)$, and to deduce from it the following resolution
  of $\bE$:
  \[
  0 \to R(-4)^2 \to R(-3)^6 \to R(-2)^4 \oplus R(-1)^2 \to \bE \to 0.
  \]
  
  We truncate it in degree $m=2$ in order to get a $2$-linear
  resolution. We easily get:
  \[
  0 \to R(-5)^{2} \to R(-4)^{10} \to R(-3)^{18} \to R(-2)^{12} \to
  \bE_{\ge 2} \to 0.
  \]

  Now  we take the $2$nd graded cohomology  
  module $\HH^2_*(E)$, which is a module of length 2, and we set
  $\bG:=(\HH^2_*(E)(-5))_{\ge 2}$. The truncated module $\bG$ has
  length 1. From \cite{Decker,HR} we deduce the resolution:
  \begin{equation}
    \label{correzione}
  0 \to R(-6)^{2} \to R(-5)^{8} \to R(-4)^{12} \to R(-3)^{8} \to
  R(-2)^{2} \to \bG \to 0.
  \end{equation}
 
To do this explicitly via {\tt Macaulay2}, one can take a special 2-instanton, as described in \cite{ancona_ott_moduli}; its monad has maps:
\[
f=\left[
\begin{array}{cc}
	0&
      {x}_{1}\\
      {x}_{1}&
      {x}_{0}\\
      {x}_{0}&
      0\\
      0&
      {-{x}_{3}}\\
      {-{x}_{3}}&
      {-{x}_{2}}\\
      {-{x}_{2}}&
      0
      \end{array}
\right]
\:\:\:\hbox{and}\:\:\:
g=\left[
\begin{array}{cccccc}
 {x}_{2}&
      {x}_{3}&
      0&
      {x}_{0}&
      {x}_{1}&
      0\\
      0&
      {x}_{2}&
      {x}_{3}&
      0&
      {x}_{0}&
      {x}_{1}
\end{array}
\right].    
\]

Now let us look at our theorems.
The hypotheses of Theorem \ref{teorema A}, item \ref{iii} cannot be satisfied and the presentation of $\ker \mu_{2}$ for a generic morphism $\mu_2: \bE_{\ge 2} \to \bM$ is not linear:
\[
\begin{array}{c}R(-4)^2 \\ \oplus\\ R(-3)^{10}\end{array} \longrightarrow\ R(-2)^{10}\ \longrightarrow\ \ker \mu_2\ \longrightarrow\ 0.
\]
Nevertheless, $\mu_2$ satisfies the hypothesis of Theorem \ref{teorema
  B}, item \ref{it:teorema B_ii}. Hence, if we restrict to the linear part of the presentation $A: R(-3)^{10} \to R(-2)^{10}$, we obtain 
a $10\times 10$ matrix of constant rank $8$.

Such a matrix $A$ does not enjoy any particular symmetry property. But if we can make sure that the map $\bE \to \bM$ comes indeed from an element of $\HH^2(S^2 E^*(-5))$, then Theorem \ref{teorema C} will guarantee that 
this matrix is skew-symmetrizable. By \cite[Theorem 5.2]{bo_fa_me} we know 
that $\HH^2(S^2 E^*(-5))$ surjects onto
$\Hom_R(\bE,\HH^2_*(E)(-5))_0$, and in fact any element there will
have the same kernel as its truncation in degree $2$ $\mu_2$, because
the map is an isomorphism in degree $1$. We can thus take a random
element in $\Hom_R(\bE,\HH^2_*(E)(-5))_0$ and our construction will
work without us having to truncate. 

A remark here is that, to get the correct argument for \cite[Theorem 5.2]{bo_fa_me}, one should apply it to the second symmetric power of
\eqref{correzione}, which reads:
\[0 \to R(-6)^{2} \to R(-5)^{8} \to R(-4)^{12} \to R(-3)^{8} \to
  R(-2)^{2} \to \bG \to 0.\]

An explicit example of this procedure yields the matrix 
$A=A_0 x_0 + A_1 x_1 + A_2 x_2 + A_3 x_3$, where:
\begin{align}
\nonumber A_0=&
\begin{footnotesize}
	\left[\begin{array}{cccccccccc}
0&       108&   594&       54&      36&      876&       108&       18&       0&       0\\
      & 0&       0&       0&       {-18}&       192&       0&       {-36}&       0&       0\\
&       &  0&     0&       36&       192&       0&       18&       0&       0\\
&&       &   0&    0&       0&       0&       0&       0&       0\\
&&&       &   0&    18&       18&       0&       0&       0\\
&&&&       &    0&   {-48}&       {-36}&      0&       0\\
&&&&&       &    0&   {-36}&       0&       0\\
&&&&&&       &    0&   0&       0\\     
&&&&&&&       &     0&  0\\
&&&&&&&&&0     
       \end{array}\right],
\end{footnotesize}
\\
\nonumber A_1=&
\begin{footnotesize}
	\left[\begin{array}{cccccccccc}
0&      {-324}&     162&       0&       {-64}&       {-492}&       {-324}&       -\frac{193}{4}&       0&       0\\
&  0&     0&       0&       {-16}&       48&       0&       -\frac{41}{2}&       0&       0\\
&&   0&    0&       {-16}&       264&       0&       -\frac{163}{4}&       0&       0\\
&&&   0&    0&       24&       0&       -\frac{9}{4}&       0&       0\\
&&&&    0&   16&       4&       0&       0&       0\\
&&&&&    0&   {-48}&       -\frac{89}{2}&       0&       0\\
&&&&&&    0&   -\frac{17}{2}&       0&       0\\
&&&&&&&    0&   \frac{27}{2}&       0\\
&&&&&&&&     0&  0\\
&&&&&&&&     &  0
       \end{array}\right],
\end{footnotesize}
\\
\nonumber A_2=&
\begin{footnotesize}
	\left[\begin{array}{cccccccccc}
0&{-438}&       {-534}&       {-108}&       {-36}&       {-1590}&       -\frac{495}{2}&       {-36}&       {-324}&       54\\
&   0&    300&       0&       18&       0&       {-75}&       18&       0&       0\\
&&   0&    {-54}&       {-36}&       {-876}&       -\frac{705}{2}&       {-36}&       0&       0\\
&&&   0&    0&       0&       -\frac{27}{2}&       0&       0&       0\\
&&&&   0&    {-18}&       {-18}&       0&       0&       0\\
&&&&&   0&    {-219}&       18&       0&       0\\
&&&&&&    0&   18&       81&       0\\
&&&&&&&   0&    0&       0\\
&&&&&&&&    0&   0\\
&&&&&&&&    &   0
\end{array}\right],
\end{footnotesize}
\\
\nonumber \text{and } A_3=&
\begin{footnotesize}
	\left[\begin{array}{cccccccccc}
	0&{-498}&       978&       \frac{319}{4}&       64&       \frac{1058}{3}&       {-438}&       64&       0&       0\\
	& 0&      612&       \frac{23}{2}&       16&       \frac{368}{3}&       {-48}&       16&       0&       0\\
	&&  0&     -\frac{35}{4}&       16&       -\frac{2116}{3}&       {-444}&       16&       0&       0\\
    &&&   0&    0&       -\frac{23}{2}&       \frac{1}{2}&       0&       \frac{27}{2}&       0\\
	&&&&    0&   {-16}&       {-4}&       0&       0&       0\\
	&&&&&     0&  -\frac{128}{3}&       16&       144&       {-24}\\
    &&&&&&      0& 4&       0&       0\\
	&&&&&&&       0&0&       0\\
	&&&&&&&&   0&0\\
	&&&&&&&&   &0
	       \end{array}\right].
	\end{footnotesize}
\end{align}

\end{ex}

\begin{rem}
In principle, the construction of Figure \ref{fig:Instanton}\subref{fig:ist4} should work for the module
of global sections $\bE$ of an \textit{arbitrary} instanton bundle $E$ of charge 4, 
as we use truncation $\bE_{\ge 2}$, which has linear resolution for any such bundle. However, the explicit
algorithm depends on a randomly chosen $4$-instanton, and as such only tells us about the behavior of general $4$-instantons.
Moreover, to ensure that Theorem \ref{teorema C} actually applies, and hence that our map is skew-symmetrizable, we need that $E$ is general enough
to satisfy D'Almeida's involution \cite{D'Almeida}.
\end{rem}

\section{Comparison with other strategies} \label{comparison}

Recall from \S \ref{sec4} that an $n+1$--dimensional linear space of matrices of constant rank gives rise to an exact sequence of type \eqref{se E}, and thus to 
two vector bundles $K=\ker A$ of rank $r=b-\rho$ and $E=\coker A$ of rank $s=a-\rho$.

It is well known that the smaller the difference $n-r$ is, the easier it becomes to find indecomposable (nontrivial) rank $r$ bundles on $\PP^n$, see for example \cite[Chapter 4]{OSS}. 
As a consequence, one has more hopes to construct examples of the type we are after by first 
building a bigger matrix of size $\alpha \times \beta$ and constant rank $\rho$, and then projecting it to a smaller $a \times b$ matrix of the same rank $\rho$. Cutting down columns (respectively rows) is equivalent to taking a quotient of the rank $\beta -\rho$ bundle $K$ (respectively the rank $\alpha -\rho$ bundle $E$), as shown in the following commutative diagram (or in an equivalent one for cutting down rows):
\begin{center}
\begin{tikzpicture}[scale=0.8]

\node (a01) at (-0.5,0) [] {$0$};
\node (a02) at (2,0) [] {$0$};

\node (a11) at (-0.5,-1.5) [] {$\OO_{\PP^n}^{b -\beta}(-1)$};
\node (a12) at (2,-1.5) [] {$\OO_{\PP^n}^{b -\beta}(-1)$};

\draw [->] (a01) -- (a11);
\draw [->] (a02) -- (a12);

\draw [] (0.6,-1.45) -- (0.9,-1.45);
\draw [] (0.6,-1.55) -- (0.9,-1.55);

\node (a20) at (-2,-3) [] {$0$};
\node (a21) at (-0.5,-3) [] {$K$};
\node (a22) at (2,-3) [] {$\OO_{\PP^n}^{b}(-1)$};
\node (a23) at (4.5,-3) [] {$\OO_{\PP^n}^{a}$};
\node (a24) at (6.3,-3) [] {$E$};
\node (a25) at (7.75,-3) [] {$0$};

\draw [->] (a20) -- (a21);
\draw [->] (a21) -- (a22);
\draw [->] (a22) --node[above]{\small $A$} (a23);
\draw [->] (a23) -- (a24);
\draw [->] (a24) -- (a25);

\draw [->] (a11) -- (a21);
\draw [->] (a12) -- (a22);

\node (a30) at (-2,-4.5) [] {$0$};
\node (a31) at (-0.5,-4.5) [] {$Q$};
\node (a32) at (2,-4.5) [] {$\OO_{\PP^n}^{\beta}(-1)$};
\node (a33) at (4.5,-4.5) [] {$\OO_{\PP^n}^{\alpha}$};
\node (a34) at (6.8,-4.5) [] {$\coker(A')$};
\node (a35) at (8.75,-4.5) [] {$0$};

\draw [] (4.45,-4.1) -- (4.45,-3.4);
\draw [] (4.55,-4.1) -- (4.55,-3.4);

 \draw [->] (a30) -- (a31);
\draw [->] (a31) -- (a32);
\draw [->] (a32) --node[above]{\small $A'$} (a33);
\draw [->] (a33) -- (a34);
\draw [->] (a34) -- (a35);

\draw [->] (a21) -- (a31);
\draw [->] (a22) -- (a32);

\node (a41) at (-0.5,-6) [] {$0$};
\node (a42) at (2,-6) [] {$0$};

\draw [->] (a31) -- (a41);
\draw [->] (a32) -- (a42);

\end{tikzpicture}
\end{center}
%

This technique was used for example in \cite{Fania_Mezzetti,bo_me_piani}. So what is the advantage of our method over that of projecting bigger matrices? The following result answers the question.

\begin{prop}\label{bound sulla proiezione}
Let $A$ be a linear space of $a \times b$ matrices of constant rank $\rho$, and $\dim(A)=n+1$. $A$ induces by projection a space $A'$ of $\alpha \times \beta$ matrices of the same constant rank $\rho$ and dimension $n+1$ for any $\alpha \ge \rho +n$ and $\beta \ge \rho + n$.
\end{prop}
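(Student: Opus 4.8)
The plan is to start from the exact sequence \eqref{se E} associated with $A$ and reduce the matrix by repeatedly cutting down columns and rows. The key observation is that cutting down a column amounts to choosing a surjection $K \twoheadrightarrow Q$ onto a vector bundle $Q$ of rank one less than $K$; dually, cutting down a row amounts to choosing an injection of bundles $E' \hookrightarrow E$ with torsion-free locally free quotient of rank one less than $E$. For these operations to preserve the property of being a \emph{bundle} (so that the resulting matrix still has constant rank), the crucial point is that the generic surjection (resp. injection) of a bundle onto a bundle of one smaller rank on $\PP^n$ has locally free kernel (resp. cokernel) as soon as the rank involved is large enough compared to $n$.

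\textbf{Key steps.} First I would set up the reduction for columns: given the bundle $K$ of rank $r = b - \rho$ on $\PP^n$, a general map $\OO_{\PP^n}(-1) \to K$ -- equivalently a general element of $\HH^0(K(1))$ -- is, after dualizing, the same as a general surjection $K^{\vee} \to \OO_{\PP^n}(1)$; its kernel is a bundle precisely when the zero locus of the corresponding section has the expected codimension, which on $\PP^n$ fails only in codimension considerations that force $r > n$, i.e. $r \ge n+1$. Since $r = b - \rho$, the condition $\beta \ge \rho + n$ after cutting columns down to width $\beta$ leaves a kernel bundle $Q$ of rank $\beta - \rho \ge n$, and one checks that the intermediate steps (rank between $r$ and $\beta - \rho$) also keep rank $\ge n$, hence at each stage the splitting $0 \to \OO_{\PP^n}(-1) \to K \to Q \to 0$ with $Q$ locally free persists; patching into the original diagram as drawn in the excerpt gives the smaller matrix $A'$ with the same constant rank $\rho$. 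Symmetrically, cutting down rows replaces $E$ of rank $s = a - \rho$ by a locally free quotient: a general injection $\OO_{\PP^n} \hookrightarrow E^{\vee}$ has locally free cokernel as long as $s \ge n+1$... wait, the bound is $\alpha \ge \rho + n$, so the quotient bundle has rank $\alpha - \rho \ge n$; again the generic such map has locally free cokernel when the rank of the quotient is at least $n$. Combining the two reductions, one can go from $a \times b$ down to any $\alpha \times \beta$ with $\alpha \ge \rho + n$ and $\beta \ge \rho + n$, preserving constant rank $\rho$ and dimension $n+1$ of the linear space.

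\textbf{Main obstacle.} The delicate point is the \emph{genericity} statement: one must show that a general map $\OO_{\PP^n}(-1) \to K$ (or $\OO_{\PP^n} \hookrightarrow E^{\vee}$) has the property that its cokernel is locally free of the expected rank, which translates into saying that a general section of a globally generated bundle of rank $\ge n$ on $\PP^n$ is nowhere vanishing, or more precisely has no zeros on the $n$-dimensional $\PP^n$. This is where the hypotheses $\alpha, \beta \ge \rho + n$ enter: they guarantee the relevant bundles $Q$, $\coker(A')^{\vee}$ have rank at least $n$, so a general section misses the (expected codimension $\ge n+1 > n$, hence empty) degeneracy locus. One must also verify that $K$ and $E$ themselves are globally generated after a twist (which follows from \eqref{se E}, since $\OO_{\PP^n}^a$ surjects onto $E$ and $K$ is a subsheaf of $\OO_{\PP^n}(-1)^b$ so $K(1) \subseteq \OO_{\PP^n}^b$ is generated by global sections only after a further twist -- a small technical check), and that the linear space structure, i.e. the identification of the base $\PP^n$, is unchanged by the projection. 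I expect the genericity/globally-generated bookkeeping to be the only real content; everything else is diagram chasing identical to the one displayed before the proposition.
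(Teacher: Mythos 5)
Your overall strategy (iterated one-row/one-column reduction via generic sections of bundles) is genuinely different from the paper's proof, which works entirely inside $\PP(\kk^a\otimes\kk^b)$: there each $\omega\in\PP A$ spans a $(\rho-1)$-plane $L_\omega$ (the row space of $\omega$), one shows that a linear projection preserves constant rank iff its center misses $\bigcup_\omega L_\omega$, and one concludes by the count $\dim\bigcup_\omega L_\omega\le n+\rho-1$. Your route could work, but as written it has a fatal flaw: the reduction step is set up with the arrows reversed, and the object you propose to choose generically does not exist. You ask for a general map $\OO_{\PP^n}(-1)\to K$, i.e.\ a general element of $\HH^0(K(1))$. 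But $K(1)$ is a subsheaf of $\OO_{\PP^n}^b$, so every section of $K(1)$ is a constant vector $x\in\kk^b$ with $x\in K_p$ for \emph{all} $p$, i.e.\ an identically vanishing linear combination of the columns of $A$. For any matrix without redundant columns, $\HH^0(K(1))=\bigcap_p K_p=0$, so there is no nonzero map $\OO_{\PP^n}(-1)\to K$ at all. The same problem afflicts your row step $\OO_{\PP^n}\hookrightarrow E^\vee$: since $E^\vee\subseteq\OO_{\PP^n}^a$, its sections are the constant functionals annihilating $\im A_p$ for all $p$, again generically zero. Your proposed fix --- ``globally generated after a further twist'' --- is not available: the sub-line-bundle must be a \emph{constant, degree-preserving} direction for the operation to be a linear projection (deletion of a column) of the matrix space, so you cannot trade $\OO_{\PP^n}(-1)$ for $\OO_{\PP^n}(-1-t)$.

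The correct bundle-theoretic formulation reverses the arrows. Deleting a column indexed by a functional $\xi\in\kk^{b*}$ preserves the rank at $p$ iff $K_p\not\subseteq\ker\xi$, i.e.\ iff the induced map $K\to\OO_{\PP^n}(-1)$ is surjective; equivalently, $\xi$ must give a nowhere-vanishing section of $K^\vee(-1)$. This bundle, unlike $K(1)$, \emph{is} globally generated by the constants $\kk^{b*}$ (it is a quotient of $\OO_{\PP^n}^b$, since $K$ is a subbundle of $\OO_{\PP^n}(-1)^b$), it has rank $b-\rho$, and a general section in a generating system of a globally generated bundle of rank $\ge n+1$ (not $\ge n$ --- at rank exactly $n$ a general section still vanishes at $c_n$ points, which is where your off-by-one wobble sits) is nowhere vanishing on $\PP^n$. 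Then $Q=\ker\bigl(K\to\OO_{\PP^n}(-1)\bigr)$ is again a subbundle of a trivial twist, so the step iterates down to $\beta-\rho\ge n$. Dually, for rows one needs a nowhere-vanishing \emph{constant} section of $E$ itself (which is globally generated as a quotient of $\OO_{\PP^n}^a$), not of $E^\vee$. With these corrections your argument closes and reproduces the bound $\alpha,\beta\ge\rho+n$ by essentially the same incidence-variety count that the paper performs in one shot on $\bigcup_\omega L_\omega$; without them, the proof does not get off the ground.
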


Proposition \ref{bound sulla proiezione} generalizes a similar result for skew-symmetric matrices appearing in \cite{Fania_Mezzetti}. An immediate consequence is that $n+1$-dimensional spaces of matrices of constant rank cannot be constructed via projection as soon as $n$ is bigger than $\min\{\alpha -\rho, \beta -\rho\}$; on the contrary, our method works for many such cases, as we hope was apparent in the previous sections. We call the examples where $n > \min\{\alpha -\rho, \beta -\rho\}$ of \emph{small corank}.

\begin{proof}
We will prove the result working on the number of columns; the other proof is identical. 
The space $\PP A$ lies in the stratum 
\[\sigma_\rho(Seg(\PP^{a-1} \times \PP^{b-1}))\setminus \sigma_{\rho-1}(Seg(\PP^{a-1} \times \PP^{b-1})) \hookrightarrow \PP(\kk^a \otimes \kk^b)
\]
of the $\rho$th secant variety to the Segre variety $Seg(\PP^{a-1} \times \PP^{b-1})$ minus its singular locus. 
We prove that $\PP A$ can be isomorphically projected to $\sigma_\rho(Seg(\PP^{a-1} \times \PP^{n+\rho}))$. 

Taking a quotient $Q$ of the bundle $K$ as above corresponds to projecting $\PP^{b-1}$ onto $\PP^{\beta -1}$ from the span of $b -\beta$ independent points $O:=\langle x_1,\ldots,x_{b-\beta}\rangle$; 
let us call this projection $\pi_O$. This in turn induces a projection $\pi_{S_O}: \PP(\kk^a \otimes \kk^b) \to \PP(\kk^\alpha \otimes \kk^b)$, whose center $S_O$ is the image of 
$\PP^{a-1} \times O$ in $\PP(\kk^a \otimes \kk^b)$ through the Segre embedding. 

Now let $\omega \in \PP A$ be any point; then $\omega=[v_1 \otimes w_1 + \ldots + v_\rho \otimes w_\rho]$ where $v_i \otimes w_i$ are independent, and in particular $w_1,\ldots,w_\rho$ are independent vectors in $\kk^b$. Thus they generate a subspace $L_\omega$ in $\PP^{b-1}$ of dimension $\rho-1$.

\emph{Claim.} Given $O \subset \PP^{b-1}$ such that $\PP A \cap S_O= \emptyset$, the matrices $\pi_{S_O}(\PP A)$ have constant rank $\rho$ if and only if $O$ does not intersect the union of the spaces $L_\omega$, as $\omega$ varies in $\PP A$.

To prove the claim, notice that $\pi_{S_O}(\PP A)(\omega)=[v_1 \otimes Mw_1 + \ldots + v_\rho \otimes Mw_\rho]$, where $M$ is the matrix representing $\pi_O$.
But then its rank is strictly less than $\rho$ if and only if the $w_i$'s can be chosen in a way that some summand $v_i \otimes Mw_i$ vanish. On the other hand, the entry locus of 
$\omega$ 
is exactly the Segre variety $Seg(\PP^{a-1} \times L_\omega) \hookrightarrow Seg(\PP^{a-1} \times \PP^{b-1})$. So a point of 
$Seg(\PP^{a-1} \times \PP^{b-1})$ belongs to some $\rho$-secant plane to the Segre, and containing $\omega$, if and only if it belongs to $Seg(\PP^{a-1} \times L_\omega)$. 

In other words the existence of a choice of $w_1, \ldots, w_\rho$ such that some $v_i \otimes Mw_i$ vanish is equivalent to saying that $O$ intersects $L_\omega$. This proves the claim.

To finish the proof of Proposition \ref{bound sulla proiezione}, just notice that:
\[
\mbox{$\dim \bigcup_{\omega \in \PP A} L_\omega \le \dim(\PP A) + \dim (L_\omega) = n +\rho -1$}. \qedhere
\]
\end{proof}

\medskip

Let us also mention a problem related to our work.
Given integers $2 \le r \le  a\le b$, one would like to determine the 
maximal dimension  $l(a,b,r)$ of a space of $a\times b$ matrices of rank $r$.
By \cite{Westwick1}, one has:
 \[
 b-r+1 \le l(a,b,r) \le a+b-2r+1,
 \] 
 and moreover $l(a,b,r)=b-r+1$ whenever $b-r+1$ does not divide $(a-1)!/(r-1)!$.
In particular, the value $l(r+1,r+n-1,r)$ can be either $n$ or $n+1$.
An explicit computation performed in \cite{Westwick_5} gives the 
$l(r+1,r+n-1,r)=n+1$. This goes as follows.
 Fix $n+1$ independent variables $x_0,\ldots,x_n$ and define the
 matrix $(kn+1)\times(kn+n-1)$ matrix $H_{n,k}=(h_{ij})$ as:  
 \[
 h_{i,j} = \begin{cases}
 x_{j-i+1},&\text{if }  0\leqslant j-i+1 \leqslant n\text{ and } j
 \not\equiv 0 \bmod (k+1),\\ 
 (a-j+i-1)x_{j-i+1},&\text{if }  0\leqslant j-i+1 \leqslant n\text{
   and } j = a(k+1),\\ 
 0,& \text{otherwise.}
 \end{cases}
 \]
 It is not too hard to see that the rank of $H_{n,k}$ is at least $kn$;
 by constructing an appropriate annihilator for $H_{n,k}$ one is then
 able to conclude that the rank is indeed $kn$.  
 
 Although the geometric meaning of this computation is still obscure
 to us, we are able to recover matrices of the same size and rank as $H_{n,k}$
 and which are moreover equivariant for a certain action of $\SL_2$ --
 this will appear elsewhere. However, matrices of size $a\times b$ and
 rank $b-r=n-1$ over $\PP^n$ will be probably difficult to find, especially so
 if $a < b$, as in this case the  cokernel will be a bundle of rank
 $n-1-(b-a) \le n-2$, on $\PP^n$, and these bundles are notoriously
 hard to construct. 

\section*{Acknowledgements} 

We would like to express our gratitude to Emilia Mezzetti for introducing us to this problem. We thank Philippe Ellia for pointing out a mistake in a
previous version of this paper. The current version of our main results are deeply inspired on his remarks. We thank the referees for their careful reading of our manuscript and constructive comments to improve the exposition.

\bibliographystyle{amsalpha}
\bibliography{biblioada-1}

\end{document}